\documentclass[12pt]{amsart}
\usepackage{amssymb, amscd, amsmath, amsthm, latexsym, enumerate}
\usepackage[all]{xy}
\usepackage{mathrsfs}

\usepackage{mathtools}

\date{May 22, 2023}

\newtheorem{dummy}{anything}[section]
\newtheorem{theorem}[dummy]{Theorem}
\newtheorem*{thma}{Theorem A}
\newtheorem*{thmb}{Theorem B}

\newtheorem{lemma}[dummy]{Lemma}
\newtheorem{proposition}[dummy]{Proposition}
\newtheorem{corollary}[dummy]{Corollary}

\theoremstyle{definition}
\newtheorem{definition}[dummy]{Definition}
  \newtheorem{example}[dummy]{Example}
  
  \newtheorem{remark}[dummy]{Remark}

  \newtheorem*{acknowledgement}{Acknowledgement}

  

\newcommand
{\eqncount}{\setcounter{equation}{\value{dummy}}%
\addtocounter{dummy}{1}}

\usepackage{color}

\newcommand{\bZ}{\mathbb Z}
\newcommand{\bM}{\mathbb M}
\newcommand{\bL}{\mathbb L}
\newcommand{\cy}[1]{\bZ/{#1}}
\DeclareMathOperator{\Isom}{Isom}
\DeclareMathOperator{\Self}{Self}
\DeclareMathOperator{\Homeo}{Homeo}
\DeclareMathOperator{\Aut}{Aut}
\DeclareMathOperator{\Out}{Out}
\newcommand{\wB}{\widetilde B}
\newcommand{\wM}{\widetilde M}
\newcommand{\wN}{\widetilde N}
\newcommand{\wX}{\widetilde X}

\newcommand{\cS}{{S}_{TOP}}

\newcommand{\cF}{\mathscr F}

\newcommand{\la}{\langle}
\newcommand{\ra}{\rangle}
\newcommand{\bd}{\partial}
\newcommand{\cN}{\mathscr N}

\setlength{\textwidth}{16truecm}
\setlength{\textheight}{22.6truecm}


\newcommand{\cE}{\mathcal E}

\newcommand{\BTop}{BTOP}
\newcommand{\BTopSpin}{BTOPSPIN}

\newcommand{\La}{\Lambda}
\newcommand{\Zpi}{\bZ[\pi]}

\DeclareMathOperator{\Hom}{Hom}
\DeclareMathOperator{\Ext}{Ext}
\DeclareMathOperator{\id}{id}

\newcommand{\BMb}{M(w_1, w_2)}
\newcommand{\BM}{E}



\calclayout

\begin{document}

\title{Quotients of $S^2\times{S^2}$}

\author{Ian Hambleton} 
\address{Department of Mathematics \& Statistics
 \newline\indent
McMaster University
 \newline\indent
Hamilton, ON  L8S 4K1, Canada}
\email{hambleton@mcmaster.ca}
\author{Jonathan A.~Hillman}
\address{
School of Mathematics and Statistics
\newline\indent University of Sydney
\newline\indent
Sydney,  NSW 2006, Australia }
\email{jonathan.hillman@sydney.edu.au}

\thanks{This research was partially supported by an NSERC Discovery Grant.}

\begin{abstract}
We consider closed topological 4-manifolds $M$ with universal cover ${S^2\times{S^2}}$ and Euler characteristic $\chi(M) = 1$. 
All such manifolds with $\pi=\pi_1(M)\cong{\cy 4}$ are homotopy equivalent.
In this case, we show that there are four homeomorphism types, and
propose a candidate for a smooth example which is not homeomorphic 
to the geometric quotient. If $\pi\cong \cy 2 \times \cy 2$, 
we show  that there are three homotopy types (and between 6 and 24 homeomorphism types).
\end{abstract}

\subjclass[2010]{57M60, 57N70}

\maketitle

\section{Introduction}

The goal of this paper is to characterize 4-manifolds with universal cover 
$S^2\times{S^2}$ up to homeomorphism in terms of standard invariants,
continuing the program of  \cite[Chapter 12]{Hillman:2002}.
Our  approach combines the analysis of Postnikov sections with recent results in surgery. The main new ingredient is the use of  bordism calculations to study the difference between homotopy self-equivalences and homeomorphisms of these $4$-manifolds.

A 4-manifold $M$ has universal covering space $\widetilde{M}\cong{S^2}\times{S^2}$ 
if and only if $\pi=\pi_1(M)$ is finite, $\chi(M)|\pi|=4$ and its Wu class $v_2(M)$
is in the image of $H^2(\pi;\mathbb{F}_2)$.
There are eight such manifolds which are geometric quotients,  in which $\pi$ acts through a subgroup of $\Isom(\mathbb{S}^2\times\mathbb{S}^2) = (O(3) \times O(3)) \rtimes \cy 2$  (see \cite[Chapter 12, \S2]{Hillman:2002}). 

\medskip
 Our classification results for the cases where $|\pi|=4$ are based on a detailed study of the intermediate coverings where $|\pi| \leq 2$ (see Sections \ref{sec:three}-\ref{sec:five}).

We first recall that  closed topological manifolds
with $\pi_1(M)=1$ or $\pi_1(M) = \cy 2$  have already been classified
(without assumption on the universal covering):
\begin{enumerate}
\item If $\pi=\cy n$, and $M$ is orientable, then $M$ is classified up to homeomorphism by
its intersection form on $H_2(M;\bZ)/Tors$, 
$w_2(M)$ and the Kirby-Siebenmann (KS) invariant
(see Freedman \cite{Freedman:1982} for $\pi =1$, 
and \cite[Theorem C]{Hambleton:1993a} for $\pi= \cy n$).
\item If $\pi=\cy 2$, and $M$ is non-orientable, 
then $M$ is classified up to homeomorphism by explicit invariants 
(see \cite[Theorem 2]{Hambleton:1994}), and a complete list of such manifolds
 is given in \cite[Theorem 3]{Hambleton:1994}.
\end{enumerate}

If we further impose the condition that   $\widetilde M = S^2 \times S^2$, 
 then it is convenient to separate the orientable and non-orientable cases.  
There are two orientable geometric $\cy 2$-quotients, 
namely the $2$-sphere bundles $S(\eta \oplus 2 \epsilon)$ and $S(3 \eta)$ 
over $RP^2$, where $\eta$ is the canonical line bundle over $RP^2$. 
The second manifold is non-spin and has a non-smoothable homotopy equivalent ``twin" 
$\ast M$ with KS $\neq 0$.

In the non-orientable case, there are two geometric  $\cy 2$-quotients: $S^2\times{RP^2}$ and $S^2\tilde\times{RP^2}=S(2\eta\oplus\epsilon)$, and 
one further smooth manifold $RP^4 \sharp_{S^1} RP^4$ obtained by removing a tubular neighbourhood of $RP^1 \subset RP^4$, 
and gluing two copies of the complement together along the boundary. Each of these has a homotopy equivalent twin $\ast M$ with KS $\neq 0$,
so there are six such non-orientable manifolds  
 (for these results see \cite[Chapter 12]{Hillman:2002} and  \cite{Teichner:1997}).

\begin{remark}
More generally, 
if  $\pi$ has order 
2 or 4 then $Wh(\pi)=0$ and the natural homomorphism 
from $L_4(1)$ to $L_4(\pi,-)$ is trivial (see Wall \cite[\S 3.4]{Wall:1976}).
Thus if $M$ is non-orientable we may surger the normal map $M\#E_8\to{M}\#S^4=M$ to obtain a  \emph{twin}:
that is a homotopy equivalent 4-manifold $\ast M$ 
with the opposite Kirby-Siebenmann invariant. Here $E_8$ denotes a closed, 
1-connected, topological $4$-manifold constructed by Freedman \cite{Freedman:1982}, whose  intersection form is definite of rank 8.
\end{remark}

We now assume that $|\pi|=4$, which implies that 
$\chi(M)=1$  for  any quotient $M$ of $S^2 \times S^2$ by a free $\pi$-action. 
Any such $M$ must be non-orientable, 
since orientable closed 4-manifolds with finite fundamental group have 
Euler characteristic $\geq2$ (by Poincar\'e duality with $\mathbb{Q}$-coefficients).
If $\pi = \cy 4$, there is just one geometric quotient 
$\bM$ obtained from the free action generated by $(u,v) \mapsto (-v,u)$, for  $(u, v) \in S^2 \times S^2$. 

 \begin{thma} Let $N$ be a closed topological $4$-manifold with $\widetilde N =S^2 \times S^2$ and $\pi_1(N) = \cy 4$.
\begin{enumerate}
\item Each  $N$ is homotopy equivalent to the unique geometric quotient $\bM$. 
\item Every self homotopy equivalence of $\bM$ is homotopic to a self-homeomorphism.
\item There are four such manifolds up to homeomorphism, 
of which exactly two have non-trivial Kirby-Siebenmann invariant.
 \end{enumerate}
 \end{thma}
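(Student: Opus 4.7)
The plan is to handle the three assertions of Theorem A in order, exploiting the surgery and Postnikov machinery for closed $4$-manifolds with finite fundamental group and universal cover $S^2\times S^2$.

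For (i), since $\widetilde N\cong S^2\times S^2$ we have $\pi_2(N)\cong H_2(\widetilde N;\bZ)=\bZ^2$, and I would first show that its $\bZ[\cy 4]^{w_1}$-module structure is forced by the requirements that $\pi$ act freely and preserve the intersection form. Next I would compute $H^3(\cy 4;\pi_2(N))$ and check that the first $k$-invariant is determined uniquely modulo the action of $\mathrm{Aut}_\pi(\pi_2(N))$, so that the equivariant Postnikov $2$-type of $N$ coincides with that of $\bM$. Combined with $\chi(N)=1$ and the standing hypothesis that $v_2(N)$ is pulled back from $H^2(\pi;\mathbb{F}_2)$, a standard argument along the lines of \cite[Chapter 12]{Hillman:2002} then produces a homotopy equivalence $N\simeq\bM$.

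For (ii), I would enumerate the group $\mathrm{Self}(\bM)$ of homotopy classes of self-homotopy equivalences. Because the Postnikov $2$-type of $\bM$ is now pinned down, a self-equivalence is determined up to homotopy by the induced action on $\pi$ and on $\pi_2(\bM)$, further constrained by compatibility with $w_1$ and the equivariant intersection form. I would then exhibit enough self-homeomorphisms, arising from isometries of $S^2\times S^2$ that normalise the $\cy 4$-action (together with possible Gluck-twist-like modifications supported in a coordinate chart), to realise every element of $\mathrm{Self}(\bM)$.

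For (iii), I would apply the topological surgery exact sequence
\[
L_5(\bZ[\cy 4]^{w_1})\longrightarrow \mathcal{S}^{\mathrm{TOP}}(\bM)\longrightarrow [\bM,G/\mathrm{TOP}]\longrightarrow L_4(\bZ[\cy 4]^{w_1}),
\]
using the known $L$-theory of $\cy 4$ with non-trivial orientation character and computing $[\bM,G/\mathrm{TOP}]$ via the mod-$2$ decomposition of $G/\mathrm{TOP}$. The Kirby--Siebenmann invariant detects the summand coming from $H^4(\bM;\bZ/2)\cong\bZ/2$. After quotienting $\mathcal{S}^{\mathrm{TOP}}(\bM)$ by the action of the self-equivalence group computed in (ii), I expect exactly four orbits, with $\mathrm{KS}$ separating them into two pairs.

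The hard part will be step (iii): both the $L$-theory computation for $\cy 4$ with non-trivial $w_1$ and, more delicately, the verification that $\mathrm{Self}(\bM)$ acts on the structure set with exactly two orbits in each fibre of $\mathrm{KS}$. A secondary technical issue is that, because $\bM$ is non-orientable of Euler characteristic one, the usual splitting of $[\bM,G/\mathrm{TOP}]$ by cohomological degree has to be set up carefully with the correct $w_1$-twisted coefficients.
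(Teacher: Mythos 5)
Your overall plan correctly identifies the three ingredients—homotopy uniqueness via the Postnikov/$k$-invariant analysis, realization of self-equivalences by homeomorphisms, and the surgery exact sequence with vanishing $L$-groups—but there is a genuine gap in step (ii), and that gap is exactly where the paper has to do its hardest work.

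You assert that ``because the Postnikov $2$-type of $\bM$ is now pinned down, a self-equivalence is determined up to homotopy by the induced action on $\pi$ and on $\pi_2(\bM)$.'' This does not follow: knowing the homotopy type of $P_2(\bM)$ says nothing about whether $G_\#(\bM)$ (based self-equivalences inducing the identity on all homotopy groups) is trivial, since $\bM$ is a $4$-complex and one must control the contributions of $\pi_3$ and $\pi_4$ as well. In fact the paper does \emph{not} establish $G_\#(\bM)=1$; it only shows $G_\#(\bM)$ has order at most $2$, via a Serre spectral sequence calculation with $H^*(\pi;\pi_3(\bM))$ and $H^*(\pi;\pi_4(\bM))$ coefficients applied to the Tsukiyama exact sequences for $G_\#(P_n(\bM))$. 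The possible nontrivial element is then killed by a completely different mechanism: any $h\in G_\#(\bM)$ must restrict to the inclusion on $\bM_o=\overline{\bM\setminus D^4}$ after homotopy, hence is a pinch map $\mathrm{id}\vee\gamma$ with $\gamma\in\pi_4(\bM)$; since $H_2(\bM;\mathbb{F}_2)=H_2(\pi;\mathbb{F}_2)$ the mod-$2$ Hurewicz map from $\pi_2$ is zero, so by the Cochran--Habegger argument the normal invariant of any pinch map vanishes, making $h$ homotopic to a homeomorphism. Your plan to realize ``every element of $\mathrm{Self}(\bM)$'' by isometries (plus Gluck twists) cannot succeed without this normal-invariant step, because a potential exotic element of $G_\#(\bM)$ acts trivially on all homotopy groups and would not be visible to any realization by maps distinguished by their $\pi_1$- or $\pi_2$-action.

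Two smaller remarks. First, your worry that the $L$-theory computation is ``the hard part'' is overstated: both $L_4(\cy 4,-)$ and $L_5(\cy 4,-)$ vanish by Wall, so the normal-invariant map $\mathcal{S}^{\mathrm{TOP}}(\bM)\to[\bM,G/\mathrm{TOP}]$ is already a bijection onto a set of size four, and once (ii) is proved the homeomorphism count is immediate. Second, your statement that the structure set should be quotiented ``by the action of the self-equivalence group computed in (ii)'' with ``exactly two orbits in each fibre of $\mathrm{KS}$'' is backwards: the content of (ii) is precisely that the action is trivial (every self-equivalence is a homeomorphism), so there are four homeomorphism types; the distribution $2+2$ of the KS invariant then requires the explicit formula of Kirby--Taylor applied to the normal invariant in $H^2(\bM;\cy2)$, together with the observation that the $E_8$ twin $\ast\bM$ changes KS, which your outline does not address.
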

 
\begin{remark}
An analysis of one construction of the geometric example $\bM$ leads to
the construction of another smooth 4-manifold in this homotopy type,
which may not be homeomorphic to the geometric manifold (see Section \ref{sec:seven}).
\end{remark}

\begin{remark}
When $|\pi|=4$,  the mod 2 Hurewicz homomorphism
 $h\colon \pi_2(M) \to H_2(M;\cy 2)$
is trivial.
Hence pinch maps have trivial normal invariants, so do not provide 
``fake" self homotopy equivalences, meaning a self equivalence not homotopic to a homeomorphism  (see \cite[p.~420]{Cochran:1990}). 
We rule out other fake self equivalences for $\pi = \cy 4$  in Section \ref{sec:thirteen}. 
\end{remark}

In the remaining cases, where $\pi = \cy 2 \times \cy 2$, we classify the homotopy types of Poincar\'e $4$-complexes, and determine the homotopy types of closed manifolds.  
 We will use the notation $PD_4$-complex for a finite Poincar\'e duality complex of formal dimension 4 (see \cite[\S 1]{Wall:1967}).

\begin{thmb} 
There are two quadratic $2$-types of $PD_4$-complexes $X$ with 
$\chi(X)=1$ and $\pi_1(X)=  \cy 2 \times \cy 2$, and seven homotopy types in all. 
\begin{enumerate}
\item All such complexes have universal cover homotopy equivalent to ${S^2}\times{S^2}$.
\item The two quadratic $2$-types are represented by the total spaces of
the two $RP^2$-bundles over $RP^2$.
\item
A third homotopy type includes a smooth manifold $N$
with $RP^4\#_{S^1}RP^4$ as a double cover.
\item
The remaining homotopy types do not include closed manifolds.
\end{enumerate}
\end{thmb}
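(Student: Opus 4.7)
The strategy is to follow the approach of \cite[Chapter 12]{Hillman:2002}: classify $X$ by its quadratic $2$-type $(\pi_1(X),w_1(X),\pi_2(X),k_X,\lambda_X)$, enumerate the homotopy types within each such type, and then decide in each case whether the homotopy type contains a closed $4$-manifold.

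For part (i), $\widetilde X$ is a simply connected $PD_4$-complex with $\chi(\widetilde X)=4$, hence $H_2(\widetilde X;\bZ)\cong\bZ^2$. Since every element of $\pi$ acts non-trivially on orientation, the signature of $\widetilde X$ is zero; the hypothesis that $v_2(X)$ lies in the image of $H^2(\pi;\mathbb F_2)$ pulls back to give $v_2(\widetilde X)=0$, so the intersection form is even. A rank-$2$, even, signature-zero unimodular form is hyperbolic, and by Milnor's classification of simply connected $PD_4$-complexes this forces $\widetilde X\simeq S^2\times S^2$.

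Next I would enumerate the quadratic $2$-types. The possible $\bZ[\pi]$-module structures on $\pi_2(X)=\bZ^2$ compatible with a hyperbolic equivariant intersection form are heavily constrained by Poincar\'e duality and can be listed directly. The $k$-invariant lies in $H^3(\pi;\pi_2(X))$, and after modding out by the natural action of $\mathrm{Aut}(\pi)\times\mathrm{Aut}_{\bZ[\pi]}(\pi_2(X))$, together with the (essentially unique) surjection $w_1\colon\pi\twoheadrightarrow\cy 2$ up to an automorphism of $\pi$, I expect this enumeration to produce exactly two inequivalent quadratic $2$-types. For part (ii), I would then verify that these are realised by the total spaces of the two $RP^2$-bundles over $RP^2$, reading off their $\pi_2$-module structure and $k$-invariant directly from the classifying maps of the bundles.

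For parts (iii) and (iv), within each quadratic $2$-type the homotopy types of $PD_4$-complexes form a finite set parametrised by a coset construction of the kind used in Hillman's book, which effectively measures how the attaching map of the top cell may be modified while preserving the $2$-type. Carrying out this count across both quadratic $2$-types should give $7$ homotopy types in all. To distinguish those containing a closed manifold, I would: (a) use the $RP^2$-bundles of (ii); (b) construct the manifold $N$ of (iii) as a free $\cy 2$-quotient of $RP^4\sharp_{S^1}RP^4$ compatible with the asserted double cover structure; and (c) for each of the four remaining homotopy types, rule out a manifold structure by obstructing the existence of compatible Stiefel-Whitney data, using the Wu formula together with the constraint that $v_2$ must lift to $H^2(\pi;\mathbb F_2)$, and if necessary invoking a Pontryagin-square refinement of $\lambda_X$.

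The principal obstacle will be step (c): showing that the four non-manifold homotopy types do not arise from any closed $4$-manifold. Organising the characteristic-class and Pontryagin-square constraints so as to exclude exactly these four cases, while leaving the other three realisable, is where the argument is likely to demand the most delicate calculation.
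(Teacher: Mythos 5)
Your general framework — classify by quadratic $2$-type, enumerate homotopy types within each by varying the top-cell attaching map, then decide which contain manifolds — does match the paper. But there are three places where your proposal either invokes a hypothesis that isn't available or proposes a method that cannot succeed.

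First, in your argument for (i) you write that ``the hypothesis that $v_2(X)$ lies in the image of $H^2(\pi;\mathbb F_2)$ pulls back to give $v_2(\widetilde X)=0$'', but no such hypothesis appears in Theorem~B. The paper instead \emph{derives} that $v_2(X)\in\{t^*u^*,\ t^*u^*+(u^*)^2\}$ (hence lifts to $H^2(\pi;\mathbb F_2)$) from Poincar\'e duality with $\mathbb F_2$-coefficients and the Wu formulas, using only $\chi(X)=1$ and $\pi_1(X)\cong(\cy 2)^2$. That step is required and is missing from your outline; without it you cannot rule out $\widetilde X\simeq S^2\tilde\times S^2$. Also, it is not true that ``every element of $\pi$ acts non-trivially on orientation'' — the kernel of $w_1$ is a subgroup of order $2$. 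The correct observation is that \emph{some} element does, since a $\chi=1$ Poincar\'e complex must be non-orientable, and this suffices for $\sigma(\widetilde X)=0$.

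Second, and more seriously, your count of homotopy types will overshoot. In each quadratic $2$-type the torsion of $\bZ^w\otimes_{\bZ[\pi]}\Gamma_W(\pi_2)$ is $(\cy 2)^2$, generated by classes $\eta_1,\eta_2$, so a naive coset count gives $4+4=8$. The figure $7$ comes from an essential asymmetry: for $RP^2\times RP^2$ the $k$-invariant is symmetric under the involution interchanging the two summands of $\pi_2$, which identifies the variations by $\eta_1$ and $\eta_2$ and yields $3$ homotopy types ($\alpha=0,\eta_1,\eta_1+\eta_2$); there is no such symmetry for $RP^2\tilde\times RP^2$, which therefore yields $4$ types ($\alpha=0,\eta_1',\eta_2',\eta_1'+\eta_2'$). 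Your outline contains no mechanism to detect and exploit this symmetry, so it would not produce the asserted count.

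Third, your proposed method for (iv) — obstructing manifold structures via Stiefel--Whitney data, the Wu formula, lifting $v_2$ to $H^2(\pi;\mathbb F_2)$, and a Pontryagin-square refinement — cannot distinguish the manifold cases from the non-manifold ones. The paper shows that within each quadratic $2$-type the $\mathbb F_2$-cohomology rings $H^*(X_\alpha;\mathbb F_2)$ (respectively $H^*(Y_\alpha;\mathbb F_2)$) are \emph{all isomorphic}, with the same $w_1$, $v_2$ and cup structure, so any obstruction built from these classes is blind to $\alpha$. What is actually used is the Poincar\'e transversality obstruction of Hambleton--Milgram \cite{Hambleton:1978}: for each candidate $X_\alpha$ one passes to an appropriate non-orientable double cover, defines a quadratic refinement of the mod $2$ cup product, and computes an Arf invariant $A(X_\alpha,f)$ which vanishes whenever $X_\alpha$ is homotopy equivalent to a closed manifold. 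Showing $A\neq 0$ for the four ``bad'' $\alpha$ is the heart of the proof of (iv), and nothing in your outline plays that role.
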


The  primary homotopy invariants of a finite $PD_4$-complex $X$ are its fundamental group 
$\pi:=\pi_1(X,x_0)$, and its second homotopy group $\pi_2(X)$ as a module over the integral group ring $\La:=\Zpi$.
The \emph{quadratic $2$-type} (introduced in \cite{Hambleton:1988}) is represented by the quadruple:
$$ \left [ \pi_1(X), \pi_2(X), k_X, s_X\right ]$$
where $s_X$ denotes
the equivariant intersection form $s_X\colon \pi_2(X) \times \pi_2(X) \to \La$,  and 
$$k_X \in H^3(\pi; \pi_2(X))$$
is the first $k$-invariant of  the algebraic $2$-type $[\pi_1(X), \pi_2(X), k_X]$
 as introduced by MacLane and Whitehead \cite{maclane-whitehead1}. 
 This data determines a space $P:=P_2(X)$, which 
 is a fibration over $K(\pi, 1)$,  classified by $k_X$,  with fibre $K(\pi_2(X), 2)$ and there is a $3$-connected reference map $\tilde c\colon X \to P$ lifting the classifying map $c\colon X \to K(\pi, 1)$ for the universal covering $\wX \to X$. Equivalently, $P_2(X)$ is the second stage of a Postnikov tower for $X$.

An \emph{isometry} of two such quadruples is an isomorphism on $\pi_1$, $\pi_2$ inducing an isometry of the equivariant intersection forms, and respecting the $k$-invariants.

\medskip
The first statement in Theorem B about the quadratic 2-types was proved in \cite[Chapter 12, \S6]{Hillman:2002}, but the homotopy classification is new. We use the invariants of \cite{Hambleton:1978} and \cite{Kim:1990} to determine which homotopy types contain closed manifolds. The homeomorphism classification appears difficult: 
all we can say at this stage is that in each case
the TOP structure set has 8 members, 
so that there are between 6 and 24 homeomorphism types of such manifolds,
of which half are not stably smoothable. 
To resolve this ambiguity, more information is needed about self homotopy equivalences. 

\medskip
Here is an outline of the paper.  After some preliminary material in Sections \ref{sec:two}-\ref{sec:twob}, 
we show that there are either two or 
four homeomorphism types with $\pi={\cy 4}$.  Part (i) of Theorem A is proved in Lemma \ref{lem:orderfour}.
We then  review the constructions of the non-orientable smoothable quotients of $S^2\times{S^2}$
with $\pi=\cy 2$ (see Sections \ref{sec:three}-\ref{sec:five}).

In Section \ref{sec:eight} we construct a new smooth 4-manifold $N$ in the  quadratic $2$-type
of the bundle space $RP^2\tilde\times{RP^2}$, 
but distinguished from it by its non-orientable double covers (see Definition \ref{def:N}).
In particular, N is not a geometric quotient.
In Sections \ref{sec:nine}-\ref{sec:ten} we show that there are no other homotopy types of 4-manifolds
with $\pi= \cy 2 \times \cy 2$ and $\chi=1$. This completes the proof of Theorem B.

In Section \ref{sec:thirteen} we complete the proof of  Theorem A via a stable homeomorphism classification result.
In  Section \ref{sec:seven} we construct a smooth manifold with $\pi = \cy 4$, which may not be diffeomorphic or even homeomorphic 
to the geometric quotient (see Definition \ref{def:elevenone}). 
The same strategy does not seem to provide a candidate for a smooth fake $RP^2\times{RP^2}$.

\tableofcontents
\begin{acknowledgement} The authors would like to thank Larry Taylor for useful conversations,  and the referee for many valuable suggestions and comments.
JAH would like to thank the Department of Mathematics 
and Statistics at McMaster University 
and the Fields Institute for their hospitality and support in September and October 2009, and in October 2017, respectively.
\end{acknowledgement}

\section{The structure set}\label{sec:two}
Classical surgery theory studies the \emph{structure set} $S_{TOP}(M)$, which consists of pairs $(N,f)$ of closed $4$-manifolds $N$ and a homotopy equivalence $f\colon N \to M$, modulo those homotopic to homeomorphisms. 
\emph{Here and throughout the paper we will always work with pointed spaces and base-point preserving maps}. 

 If $M$ is non-orientable, let $w\colon \pi_1(M) \to \cy 2$ denote the orientation character given by the first Stiefel-Whitney class $w_1(M) \in H^1(M; \cy 2)$. We fix a local coefficient system $\{\bZ^w\}$  induced by the classifying map  of the orientation double cover, and use it to define the homology of $M$ with ``twisted" coefficients. A choice of generator  $[M] \in H_4(M;\bZ^w) \cong \bZ$ gives a fundamental class for Poincar\'e duality (see Wall \cite[Chapter 1]{Wall:1967} and Taylor \cite[\S 5]{Taylor:2008}).

\medskip
The surgery exact sequence
$$ \dots \to L_5(\pi, w) \to S_{TOP}(M) \to [M, G/TOP] \to  L_4(\pi, w) $$
leads to a computation of $S_{TOP}(M)$ in favourable circumstances. The general theory due to Browder,  Kervaire, Milnor, Novikov, Sullivan and Wall for high-dimensional smooth or PL manifolds (see \cite{Wall:1999}) was extended to topological manifolds by Kirby and Siebenmann \cite{Kirby:2001}, 
and to $4$-manifolds with \emph{good} fundamental groups  by Freedman \cite{Freedman:1982}. In particular, surgery theory ``works" for topological $4$-manifolds with finite fundamental group.  We refer the reader to Kirby and Taylor \cite{Kirby:2001} for an overview of surgery theory in low dimensions.

In our situation, it is not difficult to compute the size of the structure set  $S_{TOP}(M)$. The remaining obstacle to obtaining a homeomorphism classification is to understand the action of homotopy self equivalences on the structure set. 

 Note that $G/TOP$ inherits an $H$-space structure as the degree zero space of  the connective $\bL_0$-theory spectrum, This $H$-space structure induces an  alternate abelian group structure on 
 $[X, G/TOP]$,  for any closed topological $4$-manifold,  distinct from the usual Whitney sum structure from bundle theory. 
 With this structure, Poincar\'e duality with $\bL_0$-theory coefficients  gives an isomorphism
 $$[X,G/TOP]= H^0(X; \bL_0) \cong H_4(X; \bL_0^w)$$
 of abelian groups.

Since $\pi_i(G/TOP)=0$ in all odd dimensions
and the first significant $k$-invariant of $G/TOP$ is 0, 
there is a 6-connected map $G/TOP\to{K(\bZ/2,1)}\times{K(\bZ,4)}$
(see \cite[\S 2]{Kirby:2001}).
 Hence,  in these low dimensions, 
$$[X,G/TOP]\cong{H^2(X;\bZ/2)}\oplus{H^4(X;\bZ)}.$$
It follows that this isomorphism is compatible with 
 $\bL_0$-theory Poincar\'e duality on $[X, G/TOP]$, and with ordinary Poincar\'e duality on the right-hand side,
 induced by cap product with
a (twisted) fundamental class
$$[X] \in H_4(X; \bZ^w) \cong \bZ$$
where $w=w_1(X)$.

We can now determine the size of $S_{TOP}(M)$ for manifolds with $\wM = S^2 \times S^2$ and fundamental groups of order four.

\begin{theorem}\label{thm:twoonet} Let $M$ be a closed topological $4$-manifold with $\pi_1(M) = \cy  4$ and $\chi(M)=1$.
The structure set $S_{TOP}(M)$ has four members, 
and there are either two or four homomeomorphism types 
of manifolds homotopy equivalent to $M$.
\end{theorem}

\begin{proof}
The normal invariant map in the surgery exact sequence
$$ S_{TOP}(M) \to [M,G/TOP] \cong  H^2(M;\cy 2) \oplus H^4(M;\bZ)$$ is a bijection, 
since the groups $L_5(\cy 4,-)$ and $L_4(\cy 4,-)$ are both zero (see Wall \cite[Theorem 3.4.5]{Wall:1976}). 
 Recall that $\chi(M) =1$ implies that $M$ is non-orientable, so the surgery obstruction groups denoted $L_*(\cy 4,-)$ appear with non-trivial orientation character.
The cohomology groups $H^2(M;\cy 2) = \cy 2$ and  $H^4(M;\bZ)= \cy 2$ were computed in \cite[Chapter 12, \S4]{Hillman:2002}.
Hence $|S_{TOP}(M)|=4$.
As observed in the Introduction, every such manifold $N$ has a fake twin $\ast N$.
\end{proof}

\begin{remark}
In particular,
if $h\colon M' \to M$ is a homotopy equivalence with  nontrivial normal invariant $\eta(h) \in H^2(M; \cy 2)$, then every closed 4-manifold 
with $\pi={\cy 4}$ and $\chi=1$ is homeomorphic to one of $M$, 
$M'$, $*M$ or $*M'$. The normal invariant of $M\, \sharp \, E_8 \to M$ is non-trivial in $H^4(M; \bZ) = \cy 2$. After surgery, this produces the twin manifold $\ast M$. 

Similarly, we have the manifold $\ast M'$ whose normal invariant is non-trivial in both summands of $[M,G/TOP]$,
and $KS(\ast M') = 0$ by the formula on \cite[p.~398]{Kirby:2001}. In contrast,  both $M'$ and $\ast M$
 have non-trivial Kirby-Siebenmann invariant. We do not know whether $\ast M'$ admits a smooth structure (see Section \ref{sec:seven} for a candidate).

In general, the normal invariant is an invariant of a map.
However, in this case we will complete the proof of Theorem A by showing that the homotopy type and the Kirby-Siebenmann invariant distinguish 
 homeomorphism types completely (see Section \ref{sec:thirteen}).
 \end{remark}
 The cases where $\pi_1(M) = \cy 2 \times \cy 2$ are similar.
 
 \begin{theorem}\label{thm:twotwot} Let $M$ be a closed topological $4$-manifold with
  $\pi_1(M) =\cy 2 \times \cy 2$ and $\chi(M)=1$.
The structure set $S_{TOP}(M)$ has eight members, consisting of up to 
 four distinct twin pairs of homomeomorphism types $(N, \ast N)$
of manifolds homotopy equivalent to $M$.
\end{theorem}
\begin{proof}
 We have the cohomology groups $H^2(M;\cy 2) = (\cy 2)^3$ and $H^4(M;\bZ) =\cy 2$. 
Moreover, (i) the map $S_{TOP}(M) \to [M, G/TOP]$ is injective, since $L_5((\bZ/2)^2,-)=0$, and (ii)
the surgery obstruction map
from $[M,G/TOP]$ to $L_4((\bZ/2)^2,-)=\cy 2$ is onto.  
  Hence  $S_{TOP}(M)$ has 8 elements $(N,f)$ for each homotopy type of such manifolds $M$, with domains consisting of 4 twinned pairs $(N, \ast N)$  
(see \cite[Chapter 12, \S7]{Hillman:2002}).
\end{proof}

\begin{remark}
Half of the elements of   $S_{TOP}(M)$ have domains with nontrivial Kirby-Siebenmann invariant, 
and so the image of $\Homeo (M)$ in the group of (free homotopy classes of)
self homotopy equivalences of $M$ has index at most 4.
However, whether every self homotopy equivalence of $M$ 
is homotopic to a homeomorphism remains  open.
To make further progress we need explicit representatives for the self homotopy equivalences.
\end{remark}

\section{Homotopy type invariants for finite $PD_4$-complexes}\label{sec:twob}
Let $B = P_2(X)$ denote the   Postnikov $2$-section of a finite Poincar\'e $4$-complex $X$  with orientation character $w\colon \pi_1(X) \to \cy 2$. A $B$-polarized $PD_4$-complex consists of a pair
$(X, f)$, where $f\colon X \to B$ is a $3$-equivalence. 
Two such pairs $(X, f)$ and $(Y,g)$ are equivalent if there exists a homotopy equivalence 
$h \colon X \to Y$ 
such that $f \simeq g\circ h$. Following \cite[\S 1]{Hambleton:1988}, we let  $S_4^{PD}(B, w)$ denote the set
of homotopy types of  $B$-polarized $PD_4$-complexes.

 For $PD_4$-complexes with finite fundamental group,
the set   $S_4^{PD}(B,w)$ is determined by the quadratic 2-type and 
a secondary invariant depending on $\pi_2(X)$ as a $\pi_1(X)$-module.  
Let $S_4^{PD}(B,w, \lambda)$ denote the subset of $S_4^{PD}(B, w)$ of $B$-polarized Poincar\'e complexes $(X,f)$, such that  $\lambda\colon \pi_2(B) \times \pi_2(B) \to \La$ is a hermitian form which is mapped to the intersection form $s_X$ via $f^*$ 
 (see \cite[p.~357]{Kasprowski:2021b}).
Note that if $\pi_2(B)\not=0$ then $w$ is determined by $\lambda$
 \cite [Chapter 1.4]{Teichner:1992}. 
 The elements of  $S_4^{PD}(B,w, \lambda)$  are called $PD_4$-polarizations 
of the quadratic $2$-type.

 In the rest of the paper, we will always assume that a $PD_4$-complex $X$ has one top cell (see Wall \cite[Corollary 2.3.1]{Wall:1967}). In the following statement, $\Gamma_W(\pi_2(B))$ denotes Whitehead's quadratic functor. An action of  the torsion subgroup of  $\bZ^w\otimes_{\Zpi}\Gamma_W(\pi_2(B))$ on an element $(X,f)\in S_4^{PD}(B, w)$ is defined by writing $X = K \cup_g D^4$ with $K$ a $3$-complex, and re-attaching the top cell by a suitable element $\alpha \in \pi_3(K)$ (see 
 \cite[\S 1]{Hambleton:1988} or  \cite[p.~364]{Kasprowski:2021b} for the details of this construction).

\begin{theorem}\label{thm:quad2}
Each homotopy type within the quadratic $2$-type of a $PD_4$-complex $X$ 
with $\pi$ finite may be obtained by varying 
the attaching map of the top cell to the $3$-skeleton $X^{(3)}$. 
The torsion subgroup of $\bZ^w\otimes_{\Zpi }\Gamma_W(\pi_2(B))$ 
acts  freely and transitively on the set of $PD_4$-polarizations of the quadratic $2$-type.
\end{theorem}
\begin{proof} This result is due to Hambleton and Kreck \cite[Theorem 1.1]{Hambleton:1988}, 
Teichner \cite[Chap.~2]{Teichner:1992}, and Kasprowski and Teichner \cite[Theorem 1.5]{Kasprowski:2021b}. 
\end{proof}

\begin{remark} 
In particular, the cardinality of this torsion subgroup is an upper bound for the number 
of homotopy types within the quadratic 2-type.
The homotopy types correspond bijectively to the orbits of the group  
$ \Aut(B, w,\lambda)$ of homotopy classes of self-homotopy equivalences of $B$ 
which preserve the orientation character $w$ and the hermitian form $\lambda$
on $S_4^{PD}(B,w, \lambda)$.

The quadratic 2-types of interest to us are those of the non-orientable quotients 
of $S^2\times{S^2}$. For the $\cy 2$-quotients, the number of distinct homotopy types is
determined by explicit constructions based on Theorem 3.1 (see Proposition \ref{prop:s2xrp2}),  and distinguished by explicit invariants in Theorem \ref{thm:bundlespaces}.

 However, for the quotients with $\pi=\cy 2 \times \cy 2$,  we need the additional information  provided by the action of $ \Aut(B, w,\lambda)$ 
to completely analyse the number of distinct homotopy types
(see Propositions \ref{prop:rp2xrp2} and \ref{prop:rp2txrp2}).

We note  that the results from \cite[Chapter 12]{Hillman:2002} 
which we cite are formulated there in terms of closed 4-manifolds,
but apply equally well to $PD_4$-complexes.
\end{remark}

\medskip
We  first specialize to the cases where $\pi_1(X) = \cy 4$. 

\begin{lemma}\label{lem:orderfour} 
Every $PD_4$-complex $X$ with $\pi_1(X)={\cy 4}$ and $\chi(X)=1$ 
is homotopy equivalent to the geometric quotient $\bM$. Moreover, the image $c_*[X] \in H_4(\pi; \bZ^w)$ of its fundamental class is non-zero.
\end{lemma}
\begin{proof} 
The universal cover $\widetilde{X}$ is homotopy equivalent to $S^2\times{S^2}$
\cite[Lemma 12.3]{Hillman:2002}.
It is shown in \cite[Chapter 12, \S6]{Hillman:2002}) that 
the quadratic $2$-type is uniquely determined by  the assumptions on $X$,  and  the torsion subgroup of $\bZ^w\otimes_{\Zpi }\Gamma_W(\pi_2(X))$ is zero. 
For the last statement, note that the group $\pi=\cy 4$ acts on $\Pi:= \pi_2(X)=\bZ^2$ via
$\left(\begin{smallmatrix}
\hphantom{-}0&1\\-1&0\end{smallmatrix}\right)$,
and so $\Pi\cong\Lambda/(t^2+1)=\bZ[i]$. The general result of \cite[Theorem 1.10]{Kasprowski:2022} is a stable exact sequence
$$ \cE: 0 \to H_2(K;\La^w)  \to \pi_2(X) \oplus \La^r \to H^2(K; \La^w) \to 0$$
where $K$ is any finite $2$-complex with $\pi_1(K) = \pi$, and the extension class 
$$[\cE] \in \Ext^1_{\La}(H^2(K;\La), H_2(K;\La))$$
can be naturally identified with 
the image $c_*[X] \in H_4(\pi;\bZ^w)$  of the fundamental class of $X$. Since $H_1(\pi; \bZ[i]) = 0$ and $H_1(\pi; H_2(K; \La^w)) \cong  H_4(\pi; \bZ^w) = \cy 2$, the extension $\cE$ must be non-split. 
\end{proof}

\medskip
Finally, we recall two additional invariants which can be used to show that not every finite $PD_4$-complex  is homotopy equivalent to a closed manifold. 
\begin{example}\label{ex:threeone}
Kim, Kojima and Raymond  \cite{Kim:1992} defined a $\cy 4$-valued quadratic function 
$q_{KKR}(M)$ on $\pi_2(M)\otimes{\cy 2}$, 
for $M$ a closed non-orientable 4-manifold, by the rule
\[
q_{KKR}(M)(x)=e(\nu(S_x))+2|\Self(S_x)|,
\]
where $S_x\colon S^2\to{M}$ is a self-transverse immersion representing $x$,
$e(\nu(S_x))$ is the Euler number of the normal bundle
and $\Self(S_x)$ is the set of double points of the image of $S_x$.
This is a  \emph{quadratic  enhancement} of the mod 2 equivariant intersection pairing
on $\widetilde{M}$, 
and is a homotopy invariant for $M$.
\end{example}
The second invariant is an obstruction to the reducibility of the Spivak normal fibre space to a vector bundle.

\begin{example}\label{ex:threetwo}
Let $X' \to X$ be a double cover of finite $PD_{2n}$-complexes, classified by a map $f\colon X \to RP^{k+1}$, for some $k >>n$. Following Hambleton and Milgram \cite{Hambleton:1978}, we say that the double covering is \emph{Poincar\'e splittable} if the homotopy class of the map $f$ contains a representative which is
Poincar\'e transverse to $RP^k \subset RP^{k+1}$. This always holds if $X' \to X$ is a double cover of closed manifolds, or more generally if the Spivak normal fibre space has a vector bundle reduction.
There is a quadratic map
$$q\colon H^n (X'; \cy 2) \to \cy 2$$
refining the non-singular bilinear form
$$\ell(a,b) = \la a\cup T^*b, [X']\ra,$$
where $a, b \in H^n(X';\cy 2)$ and $T \colon X' \to X'$ is the free involution induced by the double cover. 
Let $A(X,f) \in \cy 2$ denote the Arf invariant of 
this quadratic form. Then $A(X,f)$ defines a homomorphism $\cN_{2n}(RP^{\infty}) \to \cy 2$, 
which vanishes for double covers of manifolds (see \cite[Proposition 2.1]{Hambleton:1978}). 
If $X$ is orientable, then $A(X,f) = 0$ for any double cover (see \cite{Hambleton:2019}),
 but there exist non-orientable double covers in each even dimension $\geq 4$ for which $A(X,f) \neq  0$ (see \cite[Theorem 3.1]{Hambleton:1978}).
\end{example}

\section{Non-orientable quotients of $S^2\times{S^2}$ with $\pi=\cy 2$}\label{sec:three}

We introduce some notation for later use. Let $A$ be the antipodal involution of $S^2$, and let  $\eta\colon S^3\to{S^2}$ denote the Hopf fibration.
Let $\bar\eta\colon S^3\to{RP^2}$ be the composite of $\eta$ with the projection $S^2\to{RP^2}=S^2/\{x\sim{A(x)}\}$.
In this section we describe the homotopy types of non-orientable quotients of $S^2\times{S^2}$ by a free involution.

\begin{proposition} 
\label{prop:s2xrp2}
Let $X$ be a finite non-orientable $PD_4$-complex with $\pi_1(X) = \cy 2$. 
If $\wX \simeq S^2 \times S^2$, then
\begin{enumerate}
\item $X$ has the quadratic $2$-type of $S^2\times{RP^2}$.
\item There are four distinct homotopy types of $PD_4$-complexes in this quadratic 
$2$-type.
\item Exactly three of these homotopy types are represented by closed manifolds.
\end{enumerate}
The manifolds in this quadratic $2$-type are $S^2\times{RP^2}$, $S^2\tilde\times{RP}^2$ 
and  $RP^4\#_{S^1}RP^4$.
\end{proposition}

\begin{proof}
Since $\chi(\widetilde{X})=4$ and $\pi_1(X)=\cy 2$, we have $\chi(X)=2$.
There are two quadratic 2-types of non-orientable $PD_4$-complexes $X$ with $\pi=\cy 2$ and $\chi(X)=2$.
Moreover, all such quotients of $S^2\times{S^2}$ have the quadratic 2-type of $S^2\times{RP^2}$ (see  \cite[Chapter 12, \S6]{Hillman:2002}). 
We now apply Theorem \ref{thm:quad2} to analyse the homotopy types.

Let $K=\overline{S^2\times{RP^2}\setminus{D^4}}$ be the 3-skeleton of $S^2\times{RP^2}$,
let $I_1, I_2\colon S^2\to\widetilde{K}=\overline{S^2\times{S^2}\setminus2D^4}$
be the inclusions of the factors, 
and let $[J]$ be the homotopy class of a fixed lift $\widetilde{J}\colon S^3\to\widetilde{K}$
of the natural inclusion  $J\colon S^3=\partial{D}^4\to{K}$.

Since $\pi_2(S^2\times{RP^2})=\bZ^2$ is generated by 
$I_1$ and $I_2$,
the group $\Gamma_W(\pi_2(S^2\times{RP^2}))$ has basis 
$[I_1,I_2]$,  $\eta_1=I_1\circ \eta$, and $\eta_2 =I_2\circ \bar\eta$.
Since the nontrivial element of $\pi$ fixes $I_1$ and changes the sign of $I_2$,
it fixes $\eta_1$ and $\eta_2$ and changes the sign of $[I_1,I_2]$.
Hence $\Gamma_W(\pi_2(S^2\times{RP^2}))\cong\bZ^w\oplus\bZ^2$,
and so 
$\bZ^w\otimes_{\Zpi}\Gamma_W(\pi_2(S^2\times{RP^2}))
\cong\bZ\oplus(\cy 2)^2$.
In particular,
the torsion subgroup of $\bZ^w\otimes_{\Zpi}\Gamma_W(\pi_2(S^2\times{RP^2}))$ 
is isomorphic to $(\cy 2)^2$, and is generated by the images of $\eta_i$, 
for $i=1,2$.

Thus there are at most four homotopy types,  
represented by the $PD_4$-complexes $W_\alpha=K\cup_{[J]+\alpha}e^4$ 
corresponding to $\alpha=0$, $\eta_1$,  $\eta_2$ and $\eta_1+\eta_2$. 
Clearly $W_0=K\cup_{[J]}D^4\simeq{S^2\times{RP^2}}$.
According to \cite[p.~80]{Kim:1992}, $W_{\eta_1}\simeq{S^2\tilde\times{RP}^2}$
and $W_{\eta_1+\eta_2}\simeq{RP^4\#_{S^1}RP^4}$.
We shall describe these manifolds explicitly in the next section,
and show that they have distinct homotopy types in  Theorem \ref{thm:bundlespaces}.
In \cite{Hambleton:1978} it is shown that the $PD_4$-complex $P_{HM}=W_{\eta_2}$ 
is not homotopy equivalent to a closed 4-manifold 
(note that \cite{Hambleton:1978} writes the factors in the opposite order).
Thus these four homotopy types are distinct  and part (iii) follows.
\end{proof}

\begin{remark}
The only other quadratic 2-type with $\pi=\cy 2$, $w_1\not=1$ and $\chi=2$ 
is that of $RP^4\#CP^2$ (the nontrivial $RP^2$-bundle over $S^2$), which contains two homotopy types.
One of these is not homotopy equivalent to a closed 4-manifold, by \cite[\S3.3.1]{Teichner:1992}.
These $PD_4$-complexes   have universal cover $\wX\simeq{S^2\tilde\times{S^2}}$,
and do not cover $PD_4$-complexes with $\chi=1$
(see \cite[ Lemma 12.3]{Hillman:2002}).
\end{remark}

\section{Explicit constructions for $S^2\tilde\times{RP^2}$ and $RP^4\#_{S^1}RP^4$}\label{sec:four}
The goal of this section is to express these two smooth model manifolds in terms of explicit building blocks. 
The ``coordinate" formulas
will be used in later sections to compute homotopy type invariants, and to construct smooth model manifolds with $\chi(M) =1$.

Let $E$ be a regular neighbourhood of $RP^2=\{[x{ : }y{ : }z{ : }0{ : }0]\mid{x^2+y^2+z^2=1}\}$ in $RP^4$, and note the following properties:
\begin{enumerate}
\addtolength\itemsep{2pt}
\item $\nu=\overline{RP^4\setminus{E}}$ is a regular neighbourhood of $RP^1=\{[0{ : }0{ : }0{ : }u{ : }v]\mid{u^2+v^2=1}\}$;
\item $\partial{E}=\partial\nu$ is both the total space of 
a non-trivial $S^1$-bundle over $RP^2$
and the mapping torus ${S^2}\tilde\times{S^1}=S^2\times[0,1]/(s,0)\sim(A(s),1)$;
\item In particular, $\pi_1(\partial{E})\cong\bZ$,
and so $E$ is not the product $RP^2\times{D^2}$;
\item On passing to the universal cover we see that
$S^4=\widetilde{E}\cup\widetilde\nu$;
\item We may assume that $\widetilde{E}=\{(x,y,z,u,v)\in{S^4}\mid{u^2+v^2\leq\frac14}\}. $
\end{enumerate}

\medskip
Now let $h\colon \widetilde{E}\to{S^2\times{D^2}}$ be the homeomorphism given by 
$h(\tilde{e})=(x/r,y/r,z/r,2u,2v)$, where $r=\sqrt{x^2+y^2+z^2}$, 
for all $\tilde{e}=(x,y,z,u,v)\in\widetilde {E}$. 
It follows that we may write $E=S^2\times{D^2}/(s,d)\sim(A(s),-d)$,
and the projection $p\colon E\to{RP^2}$ is then given by $p([s,d])=[s]\in{RP^2}$.
The space $E$ is also an orbifold bundle with general fibre $S^2$ over the marked disc $D(2)$,
via the projection $p'([s,d])=d^2$.
Here we view $D^2$ as the unit disc in the complex plane.

We shall view $S^2$ henceforth as the purely imaginary quaternions 
of length 1.
The antipodal map $A$ is multiplication by $-1$, 
while conjugation by $\mathbf{k}$ induces rotation $R_\pi$ through
a half-turn about the $\mathbf{k}$-axis.
The sphere is the union of two hemispheres $S^2=D_-\cup{D_+}$ with
boundary $S^1=D_-\cap{D_+}$ in the $(\mathbf{i},\mathbf{j})$-plane.

The orthogonal projection $\lambda$ of the purely imaginary quaternions 
onto the $(\mathbf{i},\mathbf{j})$-plane restricts to homeomorphisms 
from each of $D_-$ and $D_+$ onto the unit disc in this plane,
and $\lambda((R_\pi(s))=A(\lambda(s))=-\lambda(s)$, for all $s\in{S^2}$.

\begin{definition}[Construction of $S^2\tilde\times{RP^2}$]\label{def:fiveone}
Doubling $E$ along its boundary gives the total space of an $S^2$-bundle over $RP^2$.
This space $DE$ is non-orientable and $v_2(DE)\not=0$,
since the core $RP^2$ in $E$ has self-intersection 1 (mod 2).
Thus $DE$ is the nontrivial, non-orientable $S^2$-bundle space 
\[
S^2\tilde\times{RP^2}=S^2\times{S^2}/(s,t)\sim(A(s),R_\pi{t}).
\]
Composition of the double covering of $RP^2$ with 
the projection of $S^2\times{S^2}$ onto its first factor induces 
the $S^2$-bundle projection $DE\to{RP^2}$. 
\end{definition}
The space $DE$ is also the total space of an orbifold bundle 
with general fibre $S^2$ over the orbifold $S(2,2)$ (the double of $D(2)$).

We may construct a different 4-manifold by identifying two copies of $E$ 
via a diffeomorphism of their boundaries which does not extend across $E$.
The action of conjugation by $e^{\pi{\mathbf{i}}t}$ on $S^2$ 
inside the unit quaternions is rotation through $2\pi{t}$ radians 
about the $\mathbf{i}$-axis.
\begin{definition}[Construction of $RP^4\#_{S^1}RP^4$] \label{def:twist}
Let $E_1$ and $E_2$ be two copies of $E$, 
and let $\xi\colon \partial{E_1}\to\partial{E_2}$ be the map given by
\[
\xi([s,y^2\mathbf{i}]_1)=[y\mathbf{i}s(y\mathbf{i})^{-1},y^2\mathbf{i}]_2,
~\forall~s\in{S^2},~\forall~{y=e^{\pi{\mathbf{k}}t}}, ~0\leq{t}\leq1.
\]
We define $RP^4\#_{S^1}RP^4=E_1\cup_\xi{E_2}$ (see \cite[p.~651]{Hambleton:1994} for another description).
\end{definition}

Note that $e^{\pi{\mathbf k}t}$ is a square root for $e^{2\pi{\mathbf{k}}t}$. 
This ``twist map" $\xi$ does not extend to a homeomorphism 
from $E_1$ to $E_2$ (see \cite[Corollary 2.2]{Kim:1990}).

\begin{remark}
The complication in the formula for $\xi$ in Definition \ref{def:twist} flows from the fact 
that this copy of $S^1$ is not closed under quaternionic multiplication,
whereas its translate $S^1\bf{i}$ is the unit circle in
$\mathbb{R}\oplus\mathbb{R}\mathbf{k}\cong\mathbb{C}$. 
\end{remark}

\begin{remark}\label{rem:fivesix}
The manifold $RP^4\#_{S^1}RP^4$ is the total space of an orbifold bundle with regular fibre $S^2$
over $S(2,2)$.
The exceptional fibres are the cores $RP^2$ of the copies of $E$,
and each has self-intersection 1.
Hence $v_2(RP^4\#_{S^1}RP^4)\not=0$.
We shall show in the next section that $RP^4\#_{S^1}RP^4$ is \emph{not} homotopy equivalent 
to a bundle space \cite{Kim:1992}, and hence it is not geometric.
\end{remark}

\medskip
We conclude this section with an explicit identification of $\wX \simeq S^2 \times S^2$ for the model manifold $X= RP^4\#_{S^1}RP^4$.

\medskip
The universal cover of $RP^4\#_{S^1}RP^4$ is the union 
$\widetilde{E}_1\cup_{\tilde\xi}\widetilde{E}_2$,
where $\tilde\xi$ is the lift of $\xi$ given by $\tilde\xi((s,x)_1)=(xsx^{-1},x)_2$, 
for all $(s,x)\in{S^2}\times{S^1}=\partial\widetilde{E}_1$.
Let $\mu_t(x)=\cos(\frac\pi2t)\mathbf{1}+\sin(\frac\pi2t)x$,
for $x\in{S^1}$ and $0\leq{t}\leq1$.
Then $\mu_0(x)=\mathbf{1}$ and $\mu_1(x)=x$, for all $x\in{S^1}$,
and 
\[
\tilde\xi_t((s,x)_1)=(\mu_t(x)s\mu_t(x)^{-1},x)_2
\] 
defines an isotopy from the identity to $\widetilde\xi$.
Hence $\widetilde{E}_1\cup_{\tilde\xi}\widetilde{E}_2\cong{S^2}\times{S^2}$.

\medskip
We may make this explicit as follows.
Let $P(r,x)=\sin(\frac\pi2r)x+\cos(\frac\pi2r){\mathbf{k}}$,
for $0\leq{r}\leq1$ and $x\in{S^1}=D_-\cap{D_+}$.
Then $P(0,x)=\mathbf{k}$ and $P(1,x)=x$, for all $x\in{S^1}$.
Let $V\colon D_+\to{S^3}$ be the function defined by 
$V(d)=P(r,x)$ if $\lambda(d)=rx$, with $0\leq{r}\leq1$ and $x\in{S^1}$.
Then the function
$H\colon S^2\times{S^2}\to\widetilde{E}_1\cup\widetilde{E}_2$,
defined by
\eqncount
\begin{equation}\label{eq:H1}
H(s,d)=(s,d)_1\in\widetilde{E}_1,
~\forall~(s,d)\in{S^2}\times{D_-} 
\end{equation}
and
\eqncount
\begin{equation}\label{eq:H2}
H(s,d)=(V(d)sV(d)^{-1},d)_2\in\widetilde{E}_2,
~\forall~(s,d)\in{S^2}\times{D_+},
\end{equation}
is a homeomorphism.
Hence $RP^4\#_{S^1}RP^4\cong{S^2\times{S^2}}/\langle\psi\rangle$,
where $\psi$ is the free involution given by
\[
\psi(s,d)=(A(s),R_\pi(d)),~\forall~(s,d)\in{S^2}\times{D_-},
\]
and
\[
\psi(s,d)=(V(R_\pi(d))^{-1}V(d)A(s)V(d)^{-1}V(R_\pi(d)),R_\pi(d)),
~\forall~(s,d)\in{S^2}\times{D_+}.
\] 
It is clear from the formula that $\psi$ is an involution, since $R_\pi^2=Ix$.

If we set $x=\cos(2\pi{t})\mathbf{i}+\sin(2\pi{t})\mathbf{j}$ for some 
$0\leq{t}\leq1$ then we may write the factor $V(R_\pi(d))^{-1}V(d)$ 
more explicitly as
\[
V(R_\pi(d))^{-1}V(d)=\cos(\pi{r})\mathbf{1}+
\sin(\pi{r})\sin(2\pi{t})\mathbf{i}-\sin(\pi{r})\cos(2\pi{t})\mathbf{j}.
\]
Thus $V(R_\pi(d))^{-1}V(d)=\mathbf{1}$ when $r=0$ and $V(R_\pi(d))^{-1}V(d)=-\mathbf{1}$ when $r=1$.
(This expression was found by solving the linear system
\[
V(R_\pi(d))(u\mathbf{i}+v\mathbf{j}+w\mathbf{k}+z\mathbf{1})=V(d),
\]
for the unknowns $u,v,w,z\in\mathbb{R}$.)

\section{Distinguishing the homotopy types}\label{sec:five}

We shall follow \cite{Kim:1992} in using the mod 2  intersection pairing 
(in the guise of $v_2$) and the invariant $q_{KKR}$
to show that $RP^4\#_{S^1}RP^4$ 
is not homotopy equivalent to either of the $S^2$-bundle spaces.
As our construction of $RP^4\#_{S^1}RP^4$ differs slightly from that of \cite{Kim:1992},
we shall give details of the geometric computation of $q_{KKR}$ for these manifolds.

\begin{theorem} \label{thm:bundlespaces}
The model manifolds $S^2\times{RP^2}$, 
$S^2\tilde\times{RP}^2$ or $RP^4\#_{S^1}RP^4$ represent distinct homotopy types, distinguished by
the Wu class $v_2$ and the invariant $q_{KKR}$.
\end{theorem}

\begin{proof}
Let $M=S^2\times{RP^2}$, 
$S^2\tilde\times{RP}^2$ or $RP^4\#_{S^1}RP^4$,
and let $x, y\in\pi_2(M)$ be the classes corresponding 
to the first and second factors of $S^2\times{S^2}$.
Then $x+y$ corresponds to the diagonal.
In each case $x$ is represented by the (general) fibres 
of the (orbifold) bundle projections
to $RP^2$, $S(2,2)$ and $S(2,2)$, respectively,
which are embedded with trivial normal bundle,
and so $q_{KKR}(M)(x)=0$,
while the normal Euler number of the diagonal is $\pm2$.

Let $f\colon S^2\to{S^2}$ be the map given by $f(x,y,z)=(x,y,|z|)$ for all 
$(x,y,z)\in{S^2}$,
and let $g\colon S^2\to{RP^2}$ be the 2-fold cover.
The 2-sphere $\{(f(s),s)|s\in{S^2}\}\subseteq S^2\times{S^2}$ represents $y$, 
and has trivial normal bundle, since $f$ is null homotopic.
Its image in $S^2\times{RP^2}$ has a single double point, and so $q_{KKR}(S^2\times{RP^2})(y)\equiv 2 \pmod 4$.
The graph $\Gamma_g\subset{S^2}\times{RP^2}$ is an embedded 2-sphere
which lifts to the diagonal embedding in $S^2\times{S^2}$.
Since there are no self intersections,  $q_{KKR}(S^2\times{RP^2})(x+y)\equiv 2 \pmod 4$ also.
Hence $q_{KKR}(S^2\times{RP^2})$ is nontrivial for $S^2\times{RP^2}$.

In $S^2\tilde\times{RP}^2$ the fibre of the bundle projection to $RP^2$ represents $y$.
Hence $$q_{KKR}(S^2\tilde\times{RP}^2)(x)=q_{KKR}(S^2\tilde\times{RP}^2)(y)=0.$$
The image of the diagonal has a circle of self-intersections.
However $id_{S^2}$ is isotopic to a self-homeomorphism of $S^2$ 
which is the identity on one hemisphere and moves the equator off itself in the other hemisphere.
Hence the diagonal embedding  is isotopic to an embedding whose image has just one self-intersection.
Hence $q_{KKR}(S^2\tilde\times{RP}^2)(x+y)=0$ also,
and so $q_{KKR}(S^2\tilde\times{RP}^2)$ is identically 0 for $S^2\tilde\times{RP}^2$.

In $RP^4\#_{S^1}RP^4$ the class $y$ is represented by the image of
$\{\mathbf{j}\}\times{S^2}$.
Double points in the image correspond to pairs $\{s,s'\}\subset{S^2}$ such that
 $\psi(\mathbf{j},s)=(\mathbf{j},s')$.
 If $\{s,s'\}$ is such a pair then $s,s'\in{D_+}$, $s'=R_\pi(s)$ and 
 \[
 \mathbf{j}V(R_\pi(s))^{-1}V(s)=-V(R_\pi(s))^{-1}V(s)\mathbf{j}.
 \]
On using the explicit formula for $V(R_\pi(d))^{-1}V(d)$ given at the end of Section \ref{sec:four},
we see that we must have $\cos(\pi{r})=0$ and $\cos(2\pi{t})=0$.
Thus there are just two possibilities for $s$, differing by the rotation $R_\pi$.
We may check that the double point is transverse.
Hence $|\Self(S_y)|=1$.
Since $\{\mathbf{j}\}\times{S^2}$ has trivial normal bundle in $S^2\times{S^2}$,
$q_{KKR}(RP^4\#_{S^1}RP^4)(y)\equiv 2 \pmod 4$, and so $RP^4\#_{S^1}RP^4$ is not homotopy equivalent to $S^2\tilde\times{RP}^2$.
It is not homotopy equivalent to $S^2\times{RP^2}$ either, since $v_2(RP^4\#_{S^1}RP^4)\not=0$.
Thus these three manifolds may be distinguished by the invariants $v_2$ and $q_{KKR}$.
\end{proof}

We shall use the following simple observation in several places below.

\begin{lemma}
\label{lem:skeleton}
Let $X=K\cup{D^4}$ be a $PD_4$-complex with $3$-skeleton $K$ and one top cell. 
Then  $H^*(X;\mathbb{F}_2)\to{H^*(K;\mathbb{F}_2)}$ is a ring homomorphism which is an isomorphism in degrees $\leq3$.
\end{lemma}

\begin{proof}
This follows immediately from the fact that $H^k(X,K;\mathbb{F}_2)=0$ for $k\leq3$.
\end{proof}

For completeness, we show that $v_2(P_{HM})=0$.
Let $K=\overline{S^2\times{RP^2}\setminus{D^4}}$ be the 3-skeleton of $S^2\times{RP^2}$.
Since the homomorphisms $H^*(W_\alpha;\mathbb{F}_2)\to{H^*(K;\mathbb{F}_2)}$ 
are isomorphisms in degrees $\leq3$, $H^1(W_\alpha;\mathbb{F}_2)=\langle{x}\rangle$, 
where $x^3=0$ in all cases. 
Let $p\colon K\to{S^2}$ denote the restriction of the projection map to the first factor of $S^2\times RP^2$.
The group $H^2(K;\mathbb{F}_2)$ is generated by $x^2$
and the class $u$ pulled back by  $p\colon K\to{S^2}$. 
Since $p\circ \eta_2$ is a constant map, it follows that
$p\circ (J+\eta_2)=p\circ J$, which extends across $D^4$. Therefore
the map $p$ extends to a map from $P_{HM}$ to $S^2$, 
and so  $u^2=0$ in $H^4(P_{HM};\mathbb{F}_2)$. 
Also since $x^4=0$,  it follows that $v_2(P_{HM})=0$. 
On the other hand, this projection does not extend in this way when $\alpha=\eta_1$
or $\eta_1+\eta_2$, and in these cases $v_2\not=0$, as we have seen.

\section{$PD_4$-complexes with $\pi=(\cy 2)^2$ and $\chi=1$}\label{sec:eight}

We now consider the cases where $\pi = \cy 2\times \cy 2$. 
As mentioned in the Introduction, there are two geometric quotients, namely $RP^2\times{RP^2}$ 
and the non-trivial bundle $RP^2\tilde\times{RP^2}$. 
In this section, we will construct a third smooth manifold $N$ with universal cover 
$S^2\times{S^2}$  and fundamental group $\pi$, which is \emph{not} a geometric quotient.

Recall from Definition \ref{def:twist} that 
 the manifold 
$RP^4\#_{S^1}RP^4=E_1\cup_\xi{E_2}$ was expressed in terms of the glueing map
$\xi\colon \partial E_1 \to \partial E_2$.
We can define a smooth free involution $\theta$ 
of $\bd E_i = S^2\tilde\times{S^1}$, with quotient ${RP^2\times{S^1}}$, by the  map $\theta([s,x])=[-s,x]$.
Note that the maps $\theta$  and $\xi$ commute.
\begin{definition}\label{def:N}
Let $N$ denote the quotient space of $RP^4\#_{S^1}RP^4$
by the smooth free involution $F$ given by the formula
$F([s,d]_i)=[-s,d]_{3-i}$ for all
$[s,d]_i\in{E_i}$ and $i=1,2$.
\end{definition}
By construction, the  manifold $N$ has  $\pi_1(N) = \cy 2 \times \cy 2$ and $\chi(N) =1$. We summarize some of its properties. 

\begin{proposition}\label{prop:sevenN} The smooth closed $4$-manifold $N$ has the following properties:
\begin{enumerate}
\item  $N$ is the quotient of $RP^4\#_{S^1}RP^4$ by a smooth free  involution;
\item the universal covering $\wN = S^2 \times S^2$;
\item $N$ is in the quadratic $2$-type of $RP^2\tilde\times{RP^2}$; 
\item $N$ is not  a geometric quotient.
\end{enumerate}
\end{proposition}

\begin{proof} Part (i) is  immediate from Definition \ref{def:N},  and part (ii) follows from the construction of $RP^4\#_{S^1}RP^4$  given in Definition \ref{def:twist}.

Part (iv) follows from Theorem \ref{thm:bundlespaces}: the manifold $N$ is not homotopy equivalent to a geometric quotient (i,e,~a bundle space over $RP^2$), since
it is covered by $RP^4\sharp_{S^1}RP^4$,
which is not homotopy equivalent to a bundle space. 

In order to prove part (iii), we first collect some information about the quadratic $2$-types in this setting.
In \cite[Chapter 12, \S5]{Hillman:2002} it is shown that if $\pi = \cy 2\times \cy 2$ and  $\chi=1$
then the action of  $\pi$ on $\pi_2\cong\bZ^2$ is essentially unique, 
and in \cite[Chapter 12, \S6]{Hillman:2002} it is shown that just 3 of the elements of 
$H^3(\pi;\pi_2)=(\cy 2)^4$ are $k$-invariants of such $PD_4$-complexes .
Two of these $k$-invariants are interchanged by the involution which
swaps the orientation-reversing elements of $\pi$ and the summands of $\pi_2$ 
fixed by each such element.
Hence
there are exactly two equivalence classes of quadratic 2-types realized by 
$PD_4$-complexes $X$
with universal cover $\wX\simeq{S^2\times{S^2}}$ and  
$$\pi_1(X)\cong\pi=\langle{t,u}\mid{t^2=u^2=(tu)^2=1}\rangle.$$
Let $\{t^*,u^*\}$ be the dual basis for $H^1(\pi;\mathbb{F}_2)$. 
 If $X$ is a $PD_4$-complex with $\pi_1(X)=\pi$ and
$\chi(X)=1$,  then we may assume that $v_1(X)=t^*+u^*$ and 
$v_2(X)$ is either $t^*u^*$ or $t^*u^*+(u^*)^2$. 
This is an easy consequence of Poincar\'e duality with  coefficients $\mathbb{F}_2$ 
and the Wu formulas.

Let $X^+$ denote the orientation double cover of $X$.
If $v_2(X)=t^*u^*$ then $v_2(X^+)=t^{*2}\not=0$
and both non-orientable double covers have $v_2=0$, 
while if $v_2(X)=t^*u^*+(u^*)^2$ then $v_2(X^+)=0$
and just one of the non-orientable double covers has $v_2=0$.

The two possibilities for $v_2$ are realized respectively by $RP^2\times{RP^2}$ (with orientation double cover $S(3\eta)$) 
 and the nontrivial bundle space 
$RP^2\tilde\times{RP^2}={S^2\times{S^2}/\pi}$, where  $\pi$ acts by
$t(s,s')=(-s,s')$ and $u(s,s')=(R_\pi(s),-s')$, for all $s,s'\in{S^2}$.

 It now follows  that $N$ is in the quadratic $2$-type of $RP^2\tilde\times{RP^2}$, since its orientation double covering $N^{+} $ has $v_2(N^{+}) = 0$
 (see Remark \ref{rem:fivesix}). In particular, $N^{+} = S(\eta\oplus 2\epsilon)$.
\end{proof}

\begin{remark}
In Section \ref{sec:ten}, we will show that there are 
exactly four distinct homotopy types in the quadratic 2-type of $RP^2\tilde\times{RP^2}$, 
of which two are represented by manifolds. 
\end{remark}

\section{The quadratic 2-type of ${RP^2\times{RP^2}}$}\label{sec:nine}

In this section, we study the  quadratic 2-type for  the geometric quotient
${RP^2\times{RP^2}}$, and show that it contains only one homotopy type represented by a closed manifold.

Let $X_0=RP^2\times{RP^2}$, and let $t$ and $u$ be the generators of 
$\pi=\pi_1(X_0)$ corresponding to the factors.
Let $\Lambda=\Zpi $.
The inclusions of the factors $S^2$ into $\widetilde{X_0}$  
determine canonical generators $I_1$ and $I_2$ for $\Pi=\pi_2(X_0)$,
and  the $\Zpi$-module $\Pi$ is canonically split as  $\bZ_t\oplus\bZ_u$,
where $\bZ_t=\Lambda/(t+1,u-1)$
and $\bZ_u=\Lambda/(t-1,u+1)$.
Note that $\Hom_\Lambda(\bZ_t,\bZ_u)=\Hom_\Lambda(\bZ_u,\bZ_t)=0$.

Let $B$ be the Postnikov 2-stage of $X_0$, and let $w$, $\lambda$ be as defined in \S3.
If $f\in{ \Aut(B,w,\lambda)}$ let $f_1$ and $f_2$ be the induced automorphisms of $\pi$
and $\Pi$.
Then $$f_2(g\cdot\xi)=f_1(g)\cdot f_2(\xi)$$
 for all $g\in\pi$ and $\xi\in\pi_2$.
The isomorphism $f_1$ must  preserve the set of orientation reversing elements of $\pi$,
since $wf_1=w$.
Thus either $f_1=id_\pi$ or $f_1(t)=u$ and $f_1(u)=t$.
If $f_1=id_\pi$ then $f_2$ is $\Lambda$-linear, and 
so must respect the direct sum splitting of $\pi_2(X_0)$,
since $ \Hom_\Lambda(\bZ_t,\bZ_u)=
 \Hom_\Lambda(\bZ_u,\bZ_t)=0$.
Since $f_2$ must also be an isometry of the pairing $\lambda$
we see that $f_2=\pm{id_\Pi}$.
If $f_1\not=id_\pi$ then $f_2$ must transpose the generators of $\Pi$,
and again is determined up to sign.
Thus the image of $ \Aut(B,w,\lambda)$ in $ \Aut(\pi)\times{ \Aut(\Pi)}$ has  order  at most 4,
and so is abelian.

\begin{proposition}\label{prop:rp2xrp2}
 There are three homotopy types of $PD_4$-complexes $X_\alpha$ 
in the quadratic $2$-type of ${RP^2\times{RP^2}}$. 
\end{proposition}

\begin{proof}
Let $K=\overline{RP^2\times{RP^2}\setminus{D^4}}$,
and let $[J]$ be the homotopy class of a fixed lift $\widetilde{J}\colon S^3\to\widetilde{K}$
of the natural inclusion  $J\colon S^3=\partial{D}^4\to{K}$.
The Hurewicz homomorphism 
$h\colon \pi_3(K)\to{H_3(\widetilde{K};\bZ)}\cong\bZ^3$
is surjective, 
with kernel the image of $\Gamma_W(\Pi)$, 
generated by Whitehead products and composites with $\eta$.
Then $h([J])$ generates $H_3(\widetilde{K};\bZ)$ 
as a $\Lambda$-module, 
and $H_3(\widetilde{K};\bZ)\cong\Lambda/(1-t)(1-u)\Lambda$. 

The elements $\eta_1=I_1\circ\bar\eta$, $\eta_2=I_2\circ\bar\eta$ 
and $\zeta=[I_1,I_2]$
are a basis for $\Gamma_W(\Pi)\cong\bZ^3$.
Since $\Gamma_W(\Pi)$ is torsion free and $2\eta_i=[I_i,I_i]$,
we see that $t\eta_i=u\eta_i=\eta_i$ for $i=1,2$,
while $t\zeta=u\zeta=-\zeta$.
Hence $\bZ^w\otimes_\Lambda\Gamma_W(\Pi)\cong\bZ\oplus(\cy 2)^2$, 
and the torsion subgroup is generated by the images of $\eta_1$ and $\eta_2$.
In this case $B=P_2(X_0)$ is the product of two copies 
of the Postnikov 2-stage for $RP^2$,
and so the $k$-invariant is symmetric under the involution which interchanges the factors.
Hence it follows from Theorem \ref{thm:quad2} that there are at most
three homotopy types of $PD_4$-complexes $X_\alpha=K\cup_{[J]+\alpha}e^4$ 
in this quadratic 2-type, represented by $\alpha=0$, 
$\eta_1$ and $\eta_1+\eta_2$.

The transposition of the factors gives an element of $ \Aut(B,w,\lambda)$
which leaves the polarizations corresponding to 0 and 
$\eta_1+\eta_2$ invariant, 
while swapping the others.
 Since the image of $\Aut(B,w,\lambda)$ in $\Aut(\Pi)$ is abelian,
 this transposition is not conjugate in $ \Aut(B,w,\lambda)$ 
to an automorphism which fixes $\eta_1$ or $\eta_2$, 
we see that $X_{\eta_1+\eta_2}\not\simeq{X_{\eta_1}}$ or $X_{\eta_2}$.
It shall follow from Theorem 8.3 that $X_{\eta_1+\eta_2}$ 
is not homotopy equivalent to $X_0$,
and so the three homotopy types are distinct.
\end{proof}

\begin{remark} Let $\{t^*,u^*\}$ be the basis of $H^1(\pi;\mathbb{F}_2)$ dual to $\{t,u\}$.
Let $X_\alpha^t$ and $X_\alpha^u$ be the covering spaces associated to 
the subgroups $\langle{t}\rangle=\mathrm{Ker}(u^*)$ and
$\langle{u}\rangle=\mathrm{Ker}(t^*)$ of $\pi$, respectively.
It follows from Lemma \ref{lem:skeleton} that
since $(t^{*})^3=(u^{*})^3=0$ in $H^3(RP^2\times{RP^2};\mathbb{F}_2)$,
we have $(t^{*})^3=(u^{*})^3=0$ in $H^3(X_\alpha;\mathbb{F}_2)$, for all $\alpha$.
It follows easily from the nonsingularity of Poincar\'e duality that
the rings $H^*(X_\alpha;\mathbb{F}_2)$ are all isomorphic. 
In particular, $w_1(X_\alpha)=t^*+u^*$, $v_2(X_\alpha)=t^*u^*$
and $x^4=0$, for all $x\in{H^1(X_\alpha;\mathbb{F}_2)}$, in each case.
Hence $X_\alpha^+\simeq{S^2\times{S^2}/\langle\sigma^2\rangle}$,
while the non-orientable double covers $X_\alpha^t$ and $X_\alpha^u$ each have $v_2=0$. 
\end{remark}

We shall now adapt the argument of \cite[\S3]{Hambleton:1978} 
to show that if $\alpha\not=0$ then $X_\alpha$ 
is not homotopy equivalent to a closed 4-manifold.

\begin{theorem}\label{thm:four}
Let $M$ be a closed $4$-manifold with $\pi=\pi_1(M)=(\cy 2)^2$ and $\chi(M)=1$,
and such that $x^4=0$ for all $x\in{H^1(M;\mathbb{F}_2)}$.
Then $M$ is homotopy equivalent to $RP^2\times{RP^2}$.
\end{theorem}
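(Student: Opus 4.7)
The plan is to argue in two steps. First I will establish that $M$ lies in the quadratic $2$-type of $RP^2\times RP^2$, namely $v_2(M)=t^*u^*$ rather than $v_2(M)=t^*u^*+(u^*)^2$. Second I will rule out the two non-trivial homotopy types $X_{\eta_1}$ and $X_{\eta_1+\eta_2}$ within that quadratic $2$-type, leaving only $X_0\simeq RP^2\times RP^2$.

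For the first step, since $\chi(M)=1$ and $H^1(M;\mathbb{F}_2)\cong H^1(\pi;\mathbb{F}_2)\cong\mathbb{F}_2^2$, the Euler characteristic count with mod $2$ Betti numbers (using Poincar\'e duality $b_3=b_1=2$, $b_4=b_0=1$) forces $\dim_{\mathbb{F}_2}H^2(M;\mathbb{F}_2)=3$, so that $\{(t^*)^2,t^*u^*,(u^*)^2\}$ is a basis. Wu's formula gives $x^4=Sq^2(x^2)=v_2(M)\cup x^2$ for $x\in H^1$, so the hypothesis $x^4=0$ translates into $v_2(M)\cup x^2=0$ for every $x\in H^1$. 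Assuming for contradiction that $v_2(M)=t^*u^*+(u^*)^2$, taking $x=u^*$ gives $t^*(u^*)^3=0$ and taking $x=t^*$ gives $(t^*)^3u^*+(t^*)^2(u^*)^2=0$. Applying the $Sq^1$-Wu relation $Sq^1|_{H^3}=w_1\cup-$ to the class $(t^*)^2u^*$ yields $(t^*)^2(u^*)^2=(t^*)^3u^*+(t^*)^2(u^*)^2$, so $(t^*)^3u^*=0$, and therefore $(t^*)^2(u^*)^2=0$. Together with $(t^*)^4=(u^*)^4=0$ from the hypothesis, the self-pairing $H^2\times H^2\to H^4$ then vanishes identically on the basis above, contradicting the non-degeneracy of Poincar\'e duality. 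Thus $v_2(M)=t^*u^*$.

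For the second step, I would adapt the argument of \cite[\S3]{Hambleton:1978}. By Lemma \ref{lem:quad2}, the homotopy types within this quadratic $2$-type correspond to orbits of the attaching class $[J]+\alpha$ under the symmetry swapping the two factors, giving three orbits $\{0\}$, $\{\eta_1,\eta_2\}$ and $\{\eta_1+\eta_2\}$. If $M$ were a closed manifold homotopy equivalent to $X_\alpha$ with $\alpha\neq 0$, the distinguishing information lies one Postnikov layer deeper than cohomology, in the Hurewicz image of $\alpha$ in $H_3(\widetilde K;\bZ)$ and in its interaction with the Spivak normal fibration of $M$. Concretely, the two factor projections $X_0\to RP^2$ do not extend across the top cell when $\alpha\in\{\eta_1,\eta_1+\eta_2\}$, since the composite of $[J]+\alpha$ with the relevant projection represents a non-zero element of $\pi_3(RP^2)\cong\bZ$; pulling this obstruction back to the hypothetical manifold $M$ and comparing with the constraints coming from $v_2(M)=t^*u^*$ and the stable tangent bundle should force $\alpha=0$.

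The main obstacle is the second step. The three $X_\alpha$ share the same mod-$2$ cohomology ring and realize the same quadratic $2$-type, so any argument distinguishing them on a closed manifold must extract finer information from the attaching map of the top cell. The cleanest formulation may be via the KKR quadratic form on $\pi_2\otimes\cy 2$ introduced in \S\ref{sec:two}, via a secondary cohomology operation detecting the image of $\alpha$ in $H_3(\widetilde K;\bZ)$, or via a careful comparison of Stiefel-Whitney classes of the stable normal bundle. Identifying the right invariant and verifying that it pins down $\alpha=0$ precisely is the delicate heart of the proof.
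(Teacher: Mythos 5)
Your first step is correct, if somewhat more elaborate than needed: you correctly derive $v_2(M)=t^*u^*$ from the hypothesis $x^4=0$ via the Wu formula and the nondegeneracy of the $\mathbb{F}_2$ cup-product pairing. The paper reaches the same conclusion by a shorter argument (if $v_2(M)=t^*u^*+(u^*)^2$ then the ring structure forces $x^3\neq 0$ for some $x\in H^1$, and Poincar\'e duality then forces $x^4\neq 0$), but your version works.

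Your second step, however, is not a proof; it is a list of candidate strategies. You correctly locate the relevant technique in \cite[\S3]{Hambleton:1978}, but you do not identify the decisive invariant nor verify that it is nonzero for $\alpha\neq 0$. The paper's argument is specific: since for $\alpha=\eta_1$ or $\eta_1+\eta_2$ the class $\alpha$ projects to $\bar\eta\in\pi_3(RP^2)$ under $pr_1$, the map $pr_1|_K$ extends to $p\colon X_\alpha\to L=RP^2\cup_{\bar\eta}e^4$. Passing to the double cover $X_\alpha^u\to X_\alpha$ (classified by $f\colon X_\alpha\to RP^{k+1}$), one considers the codimension-one Arf invariant $A(X_\alpha,f)$ of Hambleton--Milgram, which obstructs Poincar\'e transversality for the double cover and must vanish if $X_\alpha$ is homotopy equivalent to a closed manifold, by \cite[Proposition 2.2]{Hambleton:1978}. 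Taking $a=\tilde p^*(c)$ for the generator $c\in H^2(\widetilde L;\mathbb{F}_2)$ and $b$ the image of $(u^*)^2$, one checks $\{a,b\}$ is a symplectic basis and that $q(a)=q(b)=1$ (the value $q(b)=1$ uses $(u^*)^3=(u^*)^4=0$ together with \cite[Proposition 1.5]{Hambleton:1978}), so $A(X_\alpha,f)\neq 0$, a contradiction. This is neither the KKR form, nor a Stiefel--Whitney comparison, nor a direct secondary-operation argument; none of your three suggested alternatives is the one that closes the gap, and without actually computing the Arf invariant the argument is incomplete.

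Two smaller points: the orbit structure you describe ($\{0\},\{\eta_1,\eta_2\},\{\eta_1+\eta_2\}$) is right and matches the paper, but note that the symmetry is built into the $k$-invariant being symmetric under the factor swap, as in \S\ref{sec:nine}. Also, your phrase ``since the composite of $[J]+\alpha$ with the relevant projection represents a non-zero element of $\pi_3(RP^2)\cong\bZ$'' is incorrect as stated: $\pi_3(RP^2)\cong\bZ$ is generated by $\bar\eta$, and the relevant fact is that the composite \emph{is} $\bar\eta$, which lets the map extend over the top cell to land in $L$ (since $\pi_3(L)=0$), not that the map fails to extend.
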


\begin{proof}
Our hypotheses imply that $M$ is in the quadratic 2-type of $RP^2\times{RP^2}$,
and so $M\simeq{X_\alpha=K\cup_{[J]+\alpha}e^4}$, 
for some $\alpha=0$, $\eta_1$ or $\eta_1+\eta_2$.
For if $M$  were
in the quadratic 2-type of $RP^2\tilde\times{RP^2}$ 
then there  would be a class $x\in{H^1(M;\mathbb{F}_2)}$ such that $x^3\not=0$.
Poincar\'e duality considerations then imply that $x^4\not=0$ (see  
\cite[Chapter 12, \S\S4-6]{Hillman:2002}).

Suppose that $\alpha=\eta_1$ or $\eta_1+\eta_2$.
Then the image of $\alpha$ in $\pi_3(RP^2)$ under composition with 
the projection $pr_1$ to  the first factor is $\bar\eta$.
Hence the composite of the inclusion $K\subset{RP^2\times{RP^2}}$ with 
$pr_1$ extends to a map $p\colon X_\alpha\to{L=RP^2\cup_{\bar\eta}e^4}$
(note that $\mathrm{Ker}(\pi_1(p)=\langle{u}\rangle$). 
Let $\tilde{p}\colon X_\alpha^u\to\widetilde{L}$ be the induced map of double covers, 
and let $f\colon X_\alpha\to{RP^{k+1}}$ (for $k$ large) 
be the classifying map for the double cover $X_\alpha^u\to{X_\alpha}$.

Let $a=\tilde{p}^*(c)$ be the image of the generator of 
$H^2(\widetilde{L};\mathbb{F}_2)=\mathbb{F}_2$,
let $\bar{b}=(u^*)^2\in{H^2(X_\alpha;\mathbb{F}_2)}$, 
and let $b$ be the image of $\bar{b}$ in $H^2(X_\alpha^u;\mathbb{F}_2)$. 
The 3-skeleton of $X_\alpha^u$ is $K^u$, 
and so the covering transformation $t$ acts on $H^2(X_\alpha^u;\mathbb{F}_2)$ 
via the identity.
Hence the quadratic form $q$ defined in \cite[\S2]{Hambleton:1978},  and used in computing the Arf invariant
$A(X_\alpha,f)$ of the covering $X_\alpha^u\to{X_\alpha}$, 
is an enhancement of the ordinary cup product.

The pair $\{a,b\}$ is a symplectic basis with respect to the cup product,
and $q(a)=1$ since $\alpha \neq 0$,  by the argument of \cite[p.~1325]{Hambleton:1978}. 
Since $(u^{*})^3=(u^{*})^4=0$ in $H^*(X_\alpha;\mathbb{F}_2)$,
$Sq_i\bar{b}=Sq^{2-i}\bar{b}=0$ for $i=0$ or 1.
Hence  we also have $q(b)=1$, by \cite[Proposition 1.5]{Hambleton:1978},
and so $A(X_\alpha,f)$ is nonzero. But this contradicts the assumption that $X_\alpha$ is 
homotopy equivalent to a closed manifold,
by \cite[Proposition 2.2]{Hambleton:1978}, 
since any double covering of manifolds is Poincar\'e splittable. 
Hence $\alpha=0$ and so $M$ is homotopy equivalent to $RP^2\times{RP^2}$.
\end{proof}

\begin{corollary}
There is exactly one homotopy type for a closed manifold
in the quadratic $2$-type of  $RP^2\times{RP^2}$.
\end{corollary}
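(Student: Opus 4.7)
The plan is to deduce the Corollary as an essentially immediate consequence of Theorem \ref{thm:four}, since the hypothesis on the mod 2 cohomology ring appearing in that theorem is automatic for any closed manifold in the quadratic 2-type of $RP^2\times{RP^2}$.

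More concretely, suppose $M$ is a closed 4-manifold in the quadratic 2-type of $RP^2\times{RP^2}$. By Lemma \ref{lem:quad2}, $M$ is homotopy equivalent to one of the three $PD_4$-complexes $X_\alpha=K\cup_{[J]+\alpha}e^4$ with $\alpha\in\{0,\eta_1,\eta_1+\eta_2\}$ constructed at the start of Section \ref{sec:nine}. As noted just before Theorem \ref{thm:four}, the ring homomorphism $H^*(X_\alpha;\mathbb{F}_2)\to H^*(K;\mathbb{F}_2)$ is an isomorphism in degrees $\leq 3$, and since $(t^*)^3=(u^*)^3=0$ already in $H^*(RP^2\times{RP^2};\mathbb{F}_2)$ this forces $(t^*)^3=(u^*)^3=0$ in $H^3(X_\alpha;\mathbb{F}_2)$. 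Combined with Poincar\'e duality (so that the mod 2 cohomology rings $H^*(X_\alpha;\mathbb{F}_2)$ are all mutually isomorphic) one concludes $x^4=0$ for every $x\in H^1(X_\alpha;\mathbb{F}_2)$, regardless of $\alpha$.

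Hence the hypotheses of Theorem \ref{thm:four} are met by $M$, and that theorem yields $M\simeq RP^2\times{RP^2}$. In particular $\alpha=0$ is the only value realized by a closed manifold, which is precisely the content of the Corollary.

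There is really no hard step here; the work was done in Theorem \ref{thm:four} (where the Arf-invariant argument of \cite{Hambleton:1978} rules out $\alpha=\eta_1$ and $\alpha=\eta_1+\eta_2$). The only thing to be careful about in writing this out is to confirm that the cohomological hypothesis ``$x^4=0$ for all $x\in H^1(M;\mathbb{F}_2)$" is genuinely automatic inside this quadratic 2-type, which is exactly what the paragraph following the definition of the $X_\alpha$ establishes; no further computation is required.
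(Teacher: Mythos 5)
Your proposal is correct and follows exactly the logic the paper leaves implicit: the computation in the paragraph preceding Theorem \ref{thm:four} shows that every $PD_4$-complex in the quadratic $2$-type of $RP^2\times RP^2$ satisfies $x^4=0$ for all $x\in H^1(\,\cdot\,;\mathbb{F}_2)$, so Theorem \ref{thm:four} applies to any closed manifold in that quadratic $2$-type and pins it down to $RP^2\times RP^2$. The paper states the corollary with no separate proof precisely because it is this immediate combination of the cohomological observation and the theorem, which is what you have written out.
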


\begin{remark}
The inclusion $RP^2\to{L=RP^2\cup_{\bar\eta}e^4}$ induces isomorphisms on $\pi_i$ for $i\leq2$.
Since $L$ is covered by
$S^2\cup_\eta{e^4}\cup_{A\eta}e^4\simeq{S^2\cup_\eta{e^4}\vee{S^4}}=CP^2\vee{S^4}$,  
$\pi_3(L)=0$.
Hence we may view $L$  as the 4-skeleton of $P_2(RP^2)$.
(See \cite{Siegel:1967}.)
\end{remark}

\section{The quadratic 2-type of ${RP^2\tilde\times{RP^2}}$}\label{sec:ten}

In this section, we study the  quadratic 2-type for  the geometric quotient
$Y_0=RP^2\tilde\times{RP^2}$, 
and show that it contains exactly two homotopy types of closed 4-manifolds,  
represented by ${RP^2\tilde\times{RP^2}}$ and $N$ (see Definition \ref{def:N}). 
We shall need to examine the action of $ \Aut(B,w,\lambda)$
on $S^{PD}_4(B,w,\lambda)$ more closely than in the case of 
$RP^2\times{RP^2}$.

The manifold $Y_0$ is the total space of the non-trivial $RP^2$-bundle $p:Y_0\to{RP^2_b}$ 
 (where we add the subscript to distinguish the base, 
as the symbol $B$ is used elsewhere).
We may assume that $\pi=\pi_1(Y_0)$ is generated by $t$ and $u$,
where $t$ is in the image of the fibre $F\cong{RP^2}$ and 
 $u$ is the other orientation-reversing element.
Let $\{t^*,u^*\}$ be the Kronecker dual basis for $H^1(\pi;\mathbb{F}_2)$. 
Then $w_1(Y_0)=t^*+u^*$, and $u^{*3}=0$, 
since $u^*$ generates the image of $H^1(RP^2_b;\mathbb{F}_2)$.
The cohomology ring $H^*(Y_0;\mathbb{F}_2)$ is generated by 
$H^1(Y_0;\mathbb{F}_2)$, and $t^4\not=0$.
The latter two assertions follow from an elementary application of Poincar\'e duality
and the fact that $v_2(Y_0)\not=0$ (see \cite[Chapter 12,  \S4]{Hillman:2002}).

Let $\Lambda=\Zpi $.
The $\Lambda$-module $\pi_2(Y_0)$ is canonically split as 
$\bZ_t\oplus\bZ_u$,
where $\bZ_t=\Lambda/(t+1,u-1)$ is the image of $\pi_2(F)$
and $\bZ_u=\Lambda/(t-1,u+1)$ projects onto $\pi_2(RP^2_b)$.
Note that $ \Hom_\Lambda(\bZ_t,\bZ_u)=
 \Hom_\Lambda(\bZ_u,\bZ_t)=0$.

Let $B$ be the Postnikov 2-stage of $Y_0$, 
and let $w$, $\lambda$ be as defined in \S3.
If $f\in{ \Aut(B,w,\lambda)}$, let $f_1$ and $f_2$ be the induced automorphisms of $\pi$ and $\Pi$.
Then $f_2(g\cdot\xi)=f_1(g)\cdot f_2(\xi)$ for all $g\in\pi$ and $\xi\in\pi_2$.
The isomorphism $f_1$ must  preserve the set of orientation reversing elements of $\pi$,
since $wf_1=w$.
Thus either $f_1=id_\pi$ or $f_1(t)=u$ and $f_1(u)=t$.

Since we may construct $B$ by adding cells of dimension $\geq4$ to 
$Y_0$,  there is a homomorphism of truncated rings
$H^i(B;\mathbb{F}_2)\to{H^i(Y_0;\mathbb{F}_2)}$,
which is an isomorphism in degrees $i\leq2$ and a monomorphism in degree 3
(in fact an isomorphism in degree 3 also, 
since $H^*(Y_0;\mathbb{F}_2)$ is generated by $H^1(\pi;\mathbb{F}_2)$).
Since $u^{*3}=0$ and $t^{*3}\not=0$,  it follows that
$f_1$ cannot swap the generators $t$ and $u$.
Hence $f_1=id_\pi$, and so $f_2$ is $\Lambda$-linear.
Therefore $f_2$ must preserve each factor $Z_t$, $Z_u$ of the 
direct sum splitting of $\pi_2(Y_0)$, but possibly act as -1 on either summand, 
since $ \Hom_\Lambda(\bZ_t,\bZ_u)=
 \Hom_\Lambda(\bZ_u,\bZ_t)=0$.

\begin{proposition} 
\label{prop:rp2txrp2}
There are four homotopy types of $PD_4$-complexes $Y_\alpha$ 
in the quadratic $2$-type of ${RP^2\tilde\times{RP^2}}$. 
\end{proposition}

\begin{proof}

Let   $K'=\overline{RP^2\tilde\times{RP^2}\setminus{D^4}}$.
Let $J'\colon S^3=\partial{D}^4\to{K'}$ be the natural inclusion.
As outlined above,  $\Pi'=\pi_2(K')$ splits canonically as
$\Pi'=\bZ_t\oplus\bZ_u$.
Let $I_1'$ and $I_2'$ be  generators of $\bZ_t$ and $\bZ_u$,
respectively, 
and let $\eta_1'=I_1'\circ\eta$ and $\eta_2'=I_2'\circ\eta$ be the associated ``Hopf" maps.
Then $\{[I_1',I_2'],\eta_1',\eta_2'\}$ is a basis for
$\Gamma_W(\Pi')\cong\bZ^3$.
As in Propositions \ref{prop:s2xrp2} and \ref{prop:rp2xrp2},
we find that
$$\bZ^w\otimes_\Lambda\Gamma_W(\Pi')\cong\bZ\oplus(\cy 2)^2,$$
and the torsion subgroup is generated by the images of $\eta_1'$ and $\eta_2'$.
Thus there are at most four homotopy types of $PD_4$-complexes 
$Y_\alpha=K'\cup_{[J']+\alpha}e^4$ in this quadratic 2-type, 
represented by $\alpha=0$,  
$\eta_1'$, $\eta_2'$ and $\eta_1'+\eta_2'$,
by Theorem \ref{thm:quad2}.

 We now recall Whitehead's exact sequence  (see \cite[Theorem 2.3]{Kasprowski:2021b}):
$$ \dots \to H_4(\widetilde Y_\alpha) \to \Gamma_W(\pi_2(Y_\alpha)) \to \pi_3(Y_\alpha) \to 0$$
and the isomorphism $H_4(\wB;\bZ) \cong \Gamma_W(\pi_2(B))$.
The kernel of the ``Whitehead" homorphism from $\Gamma_W(\Pi')$ to
$\pi_3(Y_\alpha)$ is infinite cyclic, generated by the image of $J+\alpha$.
Since any map from $Y_\alpha$ to $Y_\beta$ covering an automorphism of $B$ 
must preserve the canonical basis for $\Pi'$ (up to signs), 
there can be no such map if $\alpha$ and $\beta$ are distinct elements of
the set $\{0,\eta_1',\eta_2', \eta_1'+\eta_2'\}$. 
Thus all four homotopy types are distinct. 
\end{proof}

\begin{remark} Let $\{t^*,u^*\}$ be the basis of $H^1(\pi;\mathbb{F}_2)$ dual to $\{t,u\}$,
and let $Y_\alpha^t$ and $Y_\alpha^u$ be the covering spaces associated to 
the subgroups $\langle{t}\rangle=\mathrm{Ker}(u^*)$ and
$\langle{u}\rangle=\mathrm{Ker}(t^*)$ of $\pi$, respectively.
We may assume that $u^*$ is induced from the base $RP^2$, so $(u^{*})^3=0$,
and then $(t^{*})^3\not=0$, since $v_2 (RP^2\tilde\times{RP^2})\not=0$.
It again follows from Lemma \ref{lem:skeleton} and Poincar\'e duality that
the $\mathbb{F}_2$-cohomology rings of the $Y_\alpha$ are all isomorphic. 
In particular, $v_2(Y_\alpha)=t^*u^*+(u^{*})^2$ in each case.
Hence in each case $Y_\alpha^+$ is homotopy equivalent to 
the $S^2$-bundle space over $RP^2$, which is a spin manifold.
The covering space $Y_\alpha^t$ is homotopy equivalent to $S^2\times{RP^2}$,
since $v_2(Y_\alpha^t)=0$,
while $Y_\alpha^u$ is homotopy equivalent to one of 
either $RP^4\#_{S^1}RP^4$ or $S^2\tilde\times{RP^2}$,
since $v_2(Y_\alpha^u)\not=0$.
\end{remark}

\begin{theorem}
Let $M$ be a closed $4$-manifold with $\pi=\pi_1(M)=(\cy 2)^2$ and $\chi(M)=1$,
and such that $x^4\not=0$ for some $x\in{H^1(M;\mathbb{F}_2)}$.
Then $M$ is homotopy equivalent to $RP^2\tilde\times{RP^2}$ or $N$.
\end{theorem}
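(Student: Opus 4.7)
The strategy has three steps: reduce to the quadratic $2$-type of $RP^2\tilde\times{RP^2}$, identify $RP^2\tilde\times{RP^2}$ and $N$ as two of the four homotopy types $Y_\alpha$ from \S\ref{sec:ten}, and rule out the remaining two $Y_\alpha$'s by an Arf invariant argument modeled on Theorem \ref{thm:four}.

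For the reduction, the hypothesis $x^4\ne 0$ for some $x\in H^1(M;\mathbb F_2)$ is, by \S\ref{sec:nine}, incompatible with the cohomology ring structure forced by the quadratic $2$-type of $RP^2\times{RP^2}$, so $M\simeq{Y_\alpha}$ for some $\alpha\in\{0,\eta_1',\eta_2',\eta_1'+\eta_2'\}$. The analysis of the non-orientable double cover $Y_\alpha^u$ in \S\ref{sec:ten} shows that $Y_\alpha^u$ is either $S^2\tilde\times{RP^2}$ or $RP^4\#_{S^1}RP^4$, partitioning the four $\alpha$'s into two pairs. The bundle space $RP^2\tilde\times{RP^2}$ visibly has $S^2\tilde\times{RP^2}$ as one of its double covers, and by Definition \ref{def:N} the manifold $N$ has $RP^4\#_{S^1}RP^4$ as a double cover; since these two covers have different homotopy types by the computations of \S\ref{sec:five}, the manifolds $RP^2\tilde\times{RP^2}$ and $N$ realise two distinct $Y_\alpha$'s, one from each pair.

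To rule out the other two $Y_\alpha$'s, I would use that $u^*$ is pulled back from the base $RP^2$, so the bundle projection $RP^2\tilde\times{RP^2}\to{RP^2}$ restricts to a map $K'\to{RP^2}$. Exactly one of $\eta_1',\eta_2'$ -- call it $\eta^{\mathrm{base}}$ -- has $\bar\eta\in\pi_3(RP^2)$ as its image under this map, while the other, $\eta^{\mathrm{fib}}$, maps to zero. For $\alpha\in\{\eta^{\mathrm{base}},\,\eta^{\mathrm{base}}+\eta^{\mathrm{fib}}\}$ the attaching class $[J']+\alpha$ maps to $\bar\eta$, so the projection extends to $p\colon Y_\alpha\to{L}=RP^2\cup_{\bar\eta}e^4$ using the top $4$-cell of $L$ nontrivially. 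With $a=\tilde p^{*}(c)$ and $b$ the image of $\bar b=(u^{*})^2$ in $H^2(Y_\alpha^u;\mathbb F_2)$, the pair $\{a,b\}$ is a symplectic basis. The computation of Theorem \ref{thm:four} then goes through verbatim: $q(a)=1$ as in \cite{Hambleton:1978}, and $q(b)=1$ follows from $Sq^i\bar b=0$ for $i=0,1$ (a consequence of $(u^{*})^3=(u^{*})^4=0$ in $H^*(Y_\alpha;\mathbb F_2)$), so $A(Y_\alpha,f)\ne 0$, contradicting \cite[Proposition 2.2]{Hambleton:1978}. Hence neither of these $Y_\alpha$'s is homotopy equivalent to a closed $4$-manifold, and so $M\simeq{RP^2\tilde\times{RP^2}}$ or $M\simeq{N}$.

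The main obstacle I foresee is the bookkeeping in identifying $\eta^{\mathrm{base}}$ versus $\eta^{\mathrm{fib}}$ and matching them against the $\alpha$'s realised by $RP^2\tilde\times{RP^2}$ and $N$. Since $N$ is \emph{a priori} a closed manifold, the Arf obstruction above cannot apply to its $\alpha$, forcing $\alpha\in\{0,\eta^{\mathrm{fib}}\}$; combined with $N\not\simeq{RP^2\tilde\times{RP^2}}$ this pins down $N\simeq{Y_{\eta^{\mathrm{fib}}}}$ automatically, closing the loop without requiring an independent cell-level analysis of Definition \ref{def:N}.
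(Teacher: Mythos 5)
Your overall strategy matches the paper's: reduce to the quadratic $2$-type of $RP^2\tilde\times RP^2$ via the hypothesis on $x^4$, list the four candidates $Y_\alpha$, kill the two $\alpha$'s projecting to $\bar\eta$ by the Arf invariant of \cite{Hambleton:1978}, and then observe that the two surviving homotopy types must be realised by $RP^2\tilde\times RP^2$ and $N$. Your closing remark that this identifies $N\simeq Y_{\eta_1'}$ without a cell-level analysis of Definition \ref{def:N} is exactly the paper's argument.

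However, the Arf computation contains a concrete error in the choice of cohomology class and cover. The bundle projection $p\colon Y_\alpha\to L$ satisfies $p^*(\text{generator})=u^*$, so $\mathrm{Ker}(\pi_1(p))=\mathrm{Ker}(u^*)=\langle t\rangle$, and the pullback of $\widetilde L\to L$ along $p$ is the double cover $Y_\alpha^t$ (in the paper's notation, $Y_\alpha^t$ is the cover with $\pi_1=\langle t\rangle$). Thus $a=\tilde p^*(c)\in H^2(Y_\alpha^t;\mathbb F_2)$. For $\{a,b\}$ to be a symplectic basis, $\bar b$ must restrict nontrivially to $H^2(Y_\alpha^t;\mathbb F_2)$. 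Since $u^*|_{\langle t\rangle}=0$, the class $(u^*)^2$ pulls back to \emph{zero} there; the correct choice, as in the paper, is $\bar b=(t^*)^2$, which survives because $t^*|_{\langle t\rangle}\neq 0$. (The paper does write $\tilde p\colon Y_\alpha^u\to\widetilde L$, which I read as a typo for $Y_\alpha^t$, consistent with its choice $\bar b=(t^*)^2$ and with $b$ living in $H^2(Y_\alpha^t;\mathbb F_2)$.) As a consequence, your justification of $q(b)=1$ via $Sq^i\bar b=0$ using $(u^*)^3=(u^*)^4=0$ does not transfer: with the correct $\bar b=(t^*)^2$ one has $Sq^2((t^*)^2)=(t^*)^4=w_1^4\neq 0$ in $H^4(RP^2\tilde\times RP^2;\mathbb F_2)$, so the Theorem \ref{thm:four} argument does \emph{not} ``go through verbatim,'' and a separate verification of $q(b)=1$ is required (the paper asserts ``we again find that $q(a)=q(b)=1$'' without spelling it out). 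You should also note that the intermediate step on double covers in your paragraph two, while a useful sanity check, is logically superfluous once the Arf obstruction eliminates $\eta_2'$ and $\eta_1'+\eta_2'$.
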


\begin{proof}
We shall adapt the proof of Theorem \ref{thm:four}, again based on the arguments of \cite{Hambleton:1978}.
In this case $M$ must be in the quadratic 2-type of $RP^2\tilde\times{RP^2}$,
and so $M\simeq{Y_\alpha}=K'\cup_{[J']+\alpha}{e^4}$ for some  $\alpha=0$,  
$\eta_1'$, $\eta_2'$ or $\eta_1'+\eta_2'$.
The double covering space $M^t$ is homotopy equivalent to  $RP^2\times{S^2}$.
As in Theorem \ref{thm:four}, the covering automorphism induces the identity on $H^2(M^u;\mathbb{F}_2)$.

Suppose that $\alpha=\eta_2'$ or  $\eta_1'+\eta_2'$.
The composite of the inclusion $K'\subset{RP^2\tilde\times{RP^2}}$ with the bundle projection 
extends to a map $p\colon Y_\alpha\to{L}$.
Let $\tilde{p}\colon Y_\alpha^u\to\widetilde{L}$ be the induced map of double covers, 
and let
$a=\tilde{p}^*(c)$ be the image of the generator of $H^2(\widetilde{L};\mathbb{F}_2)$.
Let $\bar{b}=(t^{*})^2\in{H^2(Y_\alpha;\mathbb{F}_2)}$, 
and let $b$ be the image of $\bar{b}$ in $H^2(Y_\alpha^t;\mathbb{F}_2)$. 
Then $\{a,b\}$ is a symplectic basis for the cup product pairing.
We again find that $q(a)=q(b)=1$, so the Arf invariant associated to the 2-fold
covering $Y_\alpha^u\to{Y_\alpha}$ is nonzero,
contradicting the hypothesis that $M$ is a closed manifold.
Therefore either $\alpha=0$ or $\alpha=\eta_1'$.
Since  $Y_0=RP^2\tilde\times{RP^2}$ and $N$ are manifolds in this 
quadratic 2-type (see Proposition \ref{prop:sevenN}), 
and are not homotopy equivalent, we must have $Y_{\eta_1'}\simeq{N}$
and $M$ must be one of these two manifolds.
\end{proof}

The manifolds $RP^2\tilde\times{RP^2}$ and $N$ may be distinguished by
their (non-orientable) double covers.
However, 
we do not know whether $Y_{\eta_2'}\cong{Y_{\eta_1'+\eta_2'}}$.
Nor do we
Proposition \ref{prop:s2xrp2}
are double covers of the $PD_4$-complexes $X_\beta$ or $Y_\gamma$ of \S8 or \S9.

\section{Stable classification for  $\pi= \cy 4$}\label{sec:thirteen}
Let $\xi\colon B(w_1, w_2) \to{\BTop}$ denote the normal 1-type of the geometric quotient $M$ of $S^2 \times S^2$ 
with fundamental group $\pi = \cy 4$.
We may assume that $$B:= B(w_1, w_2) ={\BTopSpin}\times{K(\pi,1)},$$
since $w_2(\widetilde{M})=0$ 
(see \cite[Theorem 5.2.1 and \S8.1]{Teichner:1992}).
Let $c\colon M \to B$ denote the classifying map of the $\xi$-structure on $M$, and 
let $\gamma\colon B \to{K(\pi,1)}$ be the projection onto the second factor. 

We use a polarization $\gamma\circ c\colon M \to K(\pi,1)$  of $\pi_1(M)$ and fix a fundamental class  $[M] \in H_4(M;\bZ^w)$. This can be regarded as an ``orientation", since cap product with this class induces Poincar\'e duality for $M$ as a non-orientable manifold\footnote{Note Larry Taylor's remark ``non-orientable manifolds can not be oriented" \cite[\S 5]{Taylor:2008}.}{}.

The preferred local coefficient system $\{\bZ^w\}$
on $M$ pulled back from $K(\pi,1)$, followed by its pullback by $\gamma$,  gives a  preferred local coefficient system on $B$. 
Under the Thom isomorphism induced by the collapse map  $\varphi\colon S^{k+4}\to T(\nu_M)$,  for large $k$, the cap product $\varphi_*[S^{k+4}] \cap U(\nu_M)$ with a Thom class  gives a generator of $H_4(M;\bZ^w)$.  Hence a choice of fundamental class $[M] \in H_4(M;\bZ^w)$ determines  a preferred  generator  $U(\nu_M)\in H^k(T(\nu); \bZ^w) \cong \bZ$, and conversely (see \cite[\S 6]{Taylor:2008}).

Therefore, after fixing  a fundamental class for $M$, this construction provides  a preferred Thom class $U(\xi)$, and fixes a fundamental class  $[N] \in H_4(N;  \bZ^w)$  for each bordism element $[N,g] \in \Omega_4(B,\xi)$, by pull-back, since $g\colon N \to B$ is a lift of the classifying map $\nu_N\colon N \to \BTop$.

In order to compute the bordism group $\Omega_4(B,\xi)$ 
we use the Atiyah-Hirzebruch spectral sequence with
$E_{p,q}^2 = H_p(\pi; \Omega_q^{TOPSPIN})$
where the coefficients
\[
\Omega_q^{TOPSPIN} = \bZ, \cy 2, \cy 2, 0, \bZ,    \qquad \text{for\ } 0 \leq 1 \leq 4,
\]
are twisted by $w_1$ (and denoted $\bZ^w$).
We have $E_{p,0}^2 = H_p(\pi;\bZ^w) = \cy 2$, 
for $p$ even, and $E_{p,0}^2 = 0$ for $p$ odd. 
Similarly, $E_{0,4}^2 = \cy 2$.
The first differential
\[
d_2\colon E_{p,q}^2 \to E_{p-2,q+1}^2
\]
is dual to a map  on mod 2 cohomology 
\[
\hat d \colon H^{p-2}(\pi;\cy 2) \to H^p(\pi; \cy 2)
\]
for the cases $(4,2)$ and $(3,1)$. 

Note that the cohomology ring $H^*(\pi; \cy 2) = P(u) \otimes E(x)$, 
where $|u| = 2$ and $|x| = 1$, with $Sq^1 u = 0$ and $x^2 = 0$. 
The classes $w_1(\nu_M) = x$ and $w_2(\nu_M) = u$.

The  $d_2$ differentials starting
at $E_{*, 0}^2 = H_*(\pi; \bZ^w)$ factor through the reduction mod 2. 
According to Teichner \cite[\S 2]{Teichner:1997} 
the dual map $\hat d$ is given by the formula
\[
\hat d(\alpha) = Sq^2\alpha + (Sq^1\alpha)\cdot w_1 + \alpha\cdot  w_2.
\]
We compute using this formula and obtain:
\begin{align*}
\hat d \colon H^1(\pi;\cy 2) \to H^3(\pi;\cy 2),  \qquad&\hat d(x) = xu \neq 0\cr
\hat d \colon H^2(\pi;\cy 2) \to H^4(\pi;\cy 2),  \qquad&\hat d(u) =  0\cr
\hat d \colon H^3(\pi;\cy 2) \to H^5(\pi;\cy 2),  \qquad&\hat d(xu) = 0\cr
\hat d \colon H^4(\pi;\cy 2) \to H^6(\pi;\cy 2),  \qquad&\hat d(u^2) = u^3\neq0.
\end{align*}
After dualizing, we get $E_{0,4}^3 = \cy 2$, $E_{3,1}^3 = 0$, 
$E_{2,2}^3 = H_2(\pi;\cy 2) = \cy 2$, and $E_{4,0}^3 = \cy 2$.
Moreover, the only nonzero entry on the line $p+q=5$ of the $E^3$ page is $E^3_{3,2}=E^2_{3,2}=\bZ/2$. 

We remark that the non-zero element in $E_{0,4}^3 = \cy 2$  is represented by the image 
of the $E_8$-manifold under the inclusion map 
$$\Omega_4^{TOPSPIN}(\ast) \to \Omega_4(B, \xi).$$
However,  we have a factorization:
$$\Omega_4^{TOPSPIN}(\ast) \to \Omega_4(B, \xi) \to \Omega_4^{TOPSPIN^c}(\ast),$$
and the $E_8$-manifold represents a non-trivial element in 
$\Omega_4^{TOPSPIN^c}(\ast)$, as noted in \cite[p.~654]{Hambleton:1994}. Hence 
the $E_{0,4}^3$-term survives to $E_{0,4}^\infty$. 
The $E_{4,0}$-term is detected by the image of the twisted fundamental class.
Let $[N,g] \in \Omega_4(B,\xi)$ represent an element with $0 \neq\gamma_* g_*[N] \in H_4(\pi;\bZ^w)$. Then $N$ is non-orientable and $2[N,g] = 0$ from the null-bordism $g \circ p_1\colon N\times I \to B$. Hence there are no extensions in passing from $E_{*,*}^\infty$ to the bordism group. 
The conclusion is that 
\[
\Omega_4(B,\xi) = \cy 2 \oplus H_2(\pi;\cy 2) \oplus \cy 2. 
\] 
Recall that $c\colon M \to B$ denote the classifying map of the $\xi$-structure on $M$.
To detect elements in this bordism group, we can define
\[
\Omega_4(B,\xi)_M = \{ [M', c'] \colon  \gamma_*c'_*[M'] =  \gamma_*c_*[M] \in H_4(\pi;\bZ^w)\}.
\]
By Lemma \ref{lem:orderfour},  the image $\gamma_*c_*[M] \in H_4(\pi;\bZ^w)$ is non-zero. 
Therefore, $\Omega_4(B,\xi)_M$ is a coset of 
$$\ker(\gamma_*\colon \Omega_4(B,\xi) \to H_4(\pi; \bZ^w)) =  \cy 2 \oplus H_2(\pi;\cy 2). $$
Hence $\Omega_4(B,\xi)_M$ consists of four distinct bordism classes.

\medskip
Next, we introduce a related bordism theory. The pull-back diagram
\eqncount
\begin{equation}\label{eq:tenone}
\vcenter{\xymatrix{\BTopSpin  \ar[r] \ar@{=}[d]& \BMb  \ar[r]^j\ar[d]^\xi & M \ar[d]^{w_1(M) 
\times w_2(M)}\\
\BTopSpin \ar[r] & \BTop\ar[r] ^(0.3){w_1 \times w_2} & \ K(\cy 2, 1) \times K(\cy 2, 2).
}}
\end{equation}
defines a space $\BMb$ and a fibration $\xi\colon \BMb  \to \BTop$. We will now study the bordism groups  $\Omega_4(\BMb ,\xi)$ and the natural map 
 $$c_* \colon \Omega_4(\BMb ,\xi) \to \Omega_4(B(w_1, w_2),\xi).$$
See Kirby and Siebenmann \cite[p.~318]{Kirby:1977} for the low-dimensional homotopy groups of $\BTop$ and related spaces. In particular,
$\pi_4(\BTop) = \bZ \oplus \cy 2$ and the map 
$$\pi_4(\BTop) \to \pi_4(B(TOP/O)) = \pi_3(TOP/O) = \cy 2$$ is a split surjection. 
Topological bundles over $S^4$ are  classified by the stable triangulation class $k \in H^4(\BTop; \cy 2)$ and the first Pontrjagin class. 
Let $\zeta_0\colon S^4 \to \BTop$ be the topological bundle with $p_1(\zeta_0) = 0 $, and $k(\zeta_0) \neq 0$.

\begin{definition} We will define two reference maps for this bordism theory.  
\begin{enumerate}
\item
We can define $\widehat{\id} \colon M \to \BMb  $, since the map $(\id \times \nu_M) \colon M \to M \times \BTop$ factors through the pull-back
$ \BMb  \subset M \times \BTop $. 
\item Let
$\zeta_M := p^*(\zeta_0)$ be the pull-back  of the bundle $\zeta_0$  over the collapse
map $p\colon M \to S^4$.

 \item Similarly, we can define
$\widehat{\id}_{\zeta_M} \colon M \to \BMb \subset M \times \BTop$  by factoring $\id \times (\nu_M\oplus \zeta_M)$ through the pull-back \eqref{eq:tenone}.
\end{enumerate}
\end{definition}
Note that  $w_1(\zeta)= w_2(\zeta) = 0$, and the bundle  $\zeta_M\oplus \zeta_M$ is stably trivial.  By construction,  $\xi \circ \widehat{\id} = \nu_M$ and 
 $\xi \circ \widehat{\id}_\zeta = \nu_M\oplus \zeta$.

 \medskip
To shorten the notation, we will set $\BM: = \BMb$. A similar calculation to the one above shows that
$$\Omega_4(\BM,\xi) = \cy 2 \oplus H_2(M;\cy 2) \oplus H_3(M;\cy 2) \oplus H_4(M; \bZ^w)$$
with the filtration quotients
\begin{enumerate}
\item  $\cF_4/\cF_3 \cong E_{4,0}^\infty = H_4(M;\bZ^w)\cong \bZ$;
\item   $\cF_3/\cF_2 \cong E_{3,1}^\infty = H_3(M;\cy 2) = \cy 2$;
\item $\cF_2/\cF_0 \cong E_{2,2}^\infty = H_2(M;\cy 2)=\cy 2$;
\item $\cF_0 \cong E_{0,4}^\infty = H_0(M;\bZ^w) = \cy 2$.
\item $\cF_2 \cong H_2(M;\cy 2) \oplus \cy 2$, split by the KS-invariant.
\end{enumerate}

\medskip
 An element $[N, \hat g, \hat\nu]$ of this bordism group is represented by triple consisting of a closed $4$-manifold $N$ together with a reference map $\hat g\colon N \to \BM$, and a bundle map $\hat\nu \colon \nu_N \to \xi$  covering $\hat g$ (see Stong \cite[p.~14]{Stong:1968}, and Taylor \cite[\S 6]{Taylor:2008}). From the pull-back diagram \eqref{eq:tenone}, we have the composite $g:=j\circ \hat g\colon N \to M$.
  
 As above, the local coefficient system and choice of fundamental class for $N$ is determined by pull-back from $M$. 
 By composition with the classifying map $c\colon M \to B$, we obtain an element $c_*[N, \hat f, \hat\nu] \in \Omega_4(B, \xi)$. To simplify the notation, we will write $[N, \hat f]_\xi := [N, \hat f, \hat\nu]$. Since $B$ is the normal $1$-type of $M$, we have the structure
  $[M,  \widehat{\id}]_\xi \in \Omega_4(\BM,\xi)$ to serve as a base point.

\begin{lemma}\label{lem:tenone}  Let $M$ and $N$ be closed non-orientable $4$-manifolds  with universal covering $S^2 \times S^2$.  If $f\colon N \to M$ is a homotopy equivalence and $KS(M) = 0$, then  $f^*(\nu_M) \cong \nu_N$ if $KS(N) = 0$, and $f^*(\nu_M\oplus \zeta_M) \cong \nu_N$ if $KS(N) \neq 0$,
\end{lemma}
\begin{proof} 
It follows from the assumptions that  
$f^*(\nu_M)$ and $\nu_N$ have the same Stiefel-Whitney classes.
In particular, if $KS(N)=0$ then $f^*\nu_M-\nu_N$ lifts to an
orientable vector bundle $\lambda$ with $w_i(\lambda)=0$ for $i>0$.
By the Dold-Whitney classification \cite[Theorem 2(c)]{Dold:1959}, oriented vector bundles over a $4$-complex are stably determined by $p_1$ and $w_4$.
In our setting, the Pontrjagin class $p_1(\lambda)$ is divisible by 2, but
 $H^4(N;\bZ)=\cy 2$
since $N$ is non-orientable.
Hence $p_1(\lambda)=0$ and $\lambda$ is (stably) trivial. 
If $KS(N) \neq 0$,  then $f^*(\nu_M\oplus \zeta_M) - \nu_N$ lifts to an orientable vector bundle, which is again stably trivial.
\end{proof}

 Define the subset of degree one bordism elements:
 $$ \Omega_4( \BM, \xi)_M = \{ [N, \hat g]_\xi  :  g_*[N] = [M] \in H_4(M;\bZ^w)\}. $$
 A homotopy equivalence $f\colon N \to M$ represents an element of $ \cS(M)$. To define its normal invariant  
 $\eta(f) \in [M, G/TOP]$,  we can apply Lemma \ref{lem:tenone} to cover $f$ by a bundle map  to  $\nu_M$  or to  $(\nu_M\oplus \zeta)$,  if $KS(N) \neq 0$. 
A choice of bundle isomorphism  $f^*\nu_M \cong \nu_N$ (respectively, $f^*(\nu_M\oplus \zeta_M) \cong \nu_N$)  fixes a $\xi$-structure and a fundamental class for $N$ by pull-back,  so that $f\colon N \to M$  composed with $\widehat{\id}$ (respectively,  $\widehat{\id}_\zeta$)
represents an element $[N, \hat f]_\xi  \in  \Omega_4(\BM, \xi)$ with $\hat f \colon N \to \BM = \BMb$,  $f = j \circ \hat f$ and  $f_*[N] = [M]$.  

 \medskip
 The next result is an application of topological surgery (see \cite{Freedman:1982}).
 
 \begin{lemma} Every element in $ \Omega_4(\BM,\xi)_M$ has the form $[M' ,\hat f']_\xi$,  where $f'\colon M' \to M$ is  a homotopy equivalence. If $[M',\hat f']_\xi = [M'',\hat f'']_\xi$, where both $f'$ and $f''$ are homotopy equivalences, then there exists a homeomorphism $h \colon M' \to M''$ such that $f''\circ h \simeq f'$.
 \end{lemma}
 \begin{proof} Let $[N,\hat g]_\xi$ be an element in $ \Omega_4(\BM, \xi)_M$. Then $\hat g\colon N \to \BM$ together with its bundle data gives a
  $2$-connected map such that $g:=j\circ \hat g \colon N \to M$ has degree one. 
  Note that $K_2(\hat g) = K_2(j\circ \hat g)$ since $j$ is $3$-connected. Since $L_4(\cy 4, -) = 0$, modified surgery can be performed
   to obtain a homotopy equivalence $f'\colon M' \to M$ in the same $\xi$-bordism class. Here we are doing surgery on the map $\hat g\colon N \to \BM$ to eliminate the kernel group $K(\hat g) = K_2(j \circ \hat g) = \ker\{H_2(N;\La) \to H_2(M;\La)\}$ (compare \cite[\S 5]{Kreck:1999}).
 
If $[M',\hat f']_\xi = [M'',\hat f'']_\xi$, where both $f'$ and $f''$ are homotopy equivalences, then a $\xi$-bordism $(W,F)$ between these elements can be surgered (relative to the boundaries) to an $s$-cobordism since $L_5(\cy 4, -) = 0$. We then apply the topological $s$-cobordism theorem.
 \end{proof}
 
\begin{corollary}\label{cor:tenfive} The map $c_*\colon \Omega_4(\BM, \xi)_M \to \Omega_4(B, \xi)_M$ is surjective.  Every element in $ \Omega_4(B, \xi)_M$ has the form $c_*[M', \hat f']_\xi$,  where $f'\colon M' \to M$ is  a homotopy equivalence.
\end{corollary}
\begin{proof}  By comparing the spectral sequences, we see that the filtration subgroup $\cF_2 \subset \Omega_4(\BM,\xi)$ is mapped isomorphically into $\Omega_4(B, \xi)$. The term $E_{3.1}^\infty(M)$ is mapped  to zero and the term
$E_{4,0}^\infty(M) = \bZ$ is mapped surjectively onto $E_{4,0}^\infty(B) = \cy 2$.
\end{proof}

\begin{remark} Since $\Omega_4(\BM,\xi)_M$ has 8 elements, and both $\cS(M)$ and $\Omega_4(B, \xi)_M$ have 4 elements, the uniqueness statement for the representatives of $\Omega_4(\BM,\xi)_M$ implies that $M$ has some non-trivial self-homeomorphism. Indeed, the standard $\cy 4$-action  on $S^2 \times S^2$ generated by $\tau(s,t) = (-t,s)$ extends to a smooth  action of $D_8 = \la \tau, \sigma\ra$, where  $\sigma(s,t) = (s, -t)$.  Hence $\sigma$ induces an involution on $M$, which is not homotopic to the identity since $\sigma_*$ is non-trivial on homology. 
\end{remark}

The projection of the difference $[M', c \circ f] - [M,c]$ into $E_{2,2}^\infty(B) = H_2(\pi;\cy 2)$
is detected by the first component of  the normal invariant $\eta(f') \in [M,G/TOP]$,
with respect to the identification
\eqncount
\begin{equation}\label{eq:one}
\cS(M) = [M,G/TOP] \cong H^2(M;\cy 2) \oplus H^4(M;\bZ) \cong H_2(M;\cy 2) \oplus \cy 2.
 \end{equation}
given by Poincar\' e duality. 
We will call this the \emph{reduced normal invariant of $M$}, 
and denote by $\overline\eta(M')\in H_2(\pi;\cy 2)$ 
the equivalence class of $\eta(f')$ modulo the action 
on normal invariants by homotopy self equivalences of $M$. 
If this is zero, 
it follows that the difference $[M',c \circ f'] - [M,c]$ is detected by the KS invariant.

\begin{lemma}\label{lem:self}
Suppose that $f \colon M \to M$ is a self homotopy equivalence. 
Then the elements $(M, c\circ f)$ and $(M,c)$ are $\xi$-bordant. 
\end{lemma}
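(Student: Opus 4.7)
The plan is to construct an explicit $\xi$-cobordism between $(M,c)$ and $(M, c\circ f)$. First I would invoke Theorem~A(ii) to replace the self-homotopy equivalence $f$ by a freely homotopic self-homeomorphism $\varphi\colon M\to M$. Since the $\xi$-bordism class of a pair $(M, c')$ depends only on the homotopy class of the classifying map, this reduces the lemma to exhibiting a $\xi$-cobordism between $(M, c)$ and $(M, c\circ \varphi)$.

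Next I would form the mapping cylinder
\[
W \;=\; M_\varphi \;=\; \bigl((M\times I)\sqcup M\bigr)\big/ (x,1)\sim \varphi(x).
\]
Because $\varphi$ is a homeomorphism, $W$ is itself homeomorphic to $M\times I$, hence a compact topological $5$-manifold with boundary $\partial W = M_{\mathrm{src}} \sqcup M_{\mathrm{tgt}}$, where $M_{\mathrm{src}} = M\times\{0\}$ is the source copy and $M_{\mathrm{tgt}}$ is the target copy of $M$. Let $r\colon W\to M_{\mathrm{tgt}}$ be the standard deformation retraction, so that $r(y)=y$ for $y\in M_{\mathrm{tgt}}$ and $r(x,t)=\varphi(x)$ for $(x,t)\in M\times I$.

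I would then equip $W$ with the $\xi$-structure $C := c\circ r\colon W\to B$. Since $r$ is a homotopy equivalence and $c$ lifts the stable normal bundle $\nu_M\colon M\to \mathrm{BTop}$, the composite $C$ genuinely lifts $\nu_W \cong r^*\nu_M \oplus \epsilon$. Restricting to boundary, one finds $C|_{M_{\mathrm{src}}} = c\circ\varphi$ (since $r(x,0)=\varphi(x)$) and $C|_{M_{\mathrm{tgt}}} = c$ (since $r$ is the identity there), so $(W,C)$ is the required $\xi$-cobordism and $[M,c\circ f] = [M,c]$ in $\Omega_4(B,\xi)$.

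The main obstacle is the reduction to a homeomorphism: the mapping cylinder of a general continuous $f$ is not a manifold, so Theorem~A(ii) is essential to this approach. A hands-on alternative that avoids it would be to verify agreement in each associated graded piece of the filtration on $\Omega_4(B,\xi)$: the images in $H_4(\pi;\cy 2)$ coincide because $f_*[M]=[M]$; the reduced normal invariant $\overline\eta(f)\in H_2(\pi;\cy 2)$ vanishes because the mod $2$ Hurewicz homomorphism for $M$ is trivial, forcing pinch-map normal invariants to be trivial; and the Kirby--Siebenmann summand is a homeomorphism invariant and hence unchanged. The mapping-cylinder route is however both cleaner and avoids having to realize the spectral-sequence filtration explicitly on actual bordism classes.
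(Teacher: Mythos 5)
Your mapping-cylinder construction is geometrically correct, but it has an architectural problem within this paper that you have not noticed: invoking Theorem~A(ii) here is circular. Section~\ref{sec:thirteen} is explicitly offered as an \emph{alternate} route to Theorem~A, and the Corollary that follows this lemma is precisely where Theorem~A(ii) and~(iii) are re-derived (``there are 4 distinct stable homeomorphism classes \dots the structure set $S_{TOP}(M)$ has 4 elements, so there can be no non-trivial self-homotopy equivalences''). If the lemma already presupposed that every self-equivalence is homotopic to a homeomorphism, the Corollary would be vacuous. So while replacing $f$ by a freely homotopic self-homeomorphism $\varphi$ and taking the mapping cylinder $M_\varphi \cong M\times I$ does produce a genuine $\xi$-cobordism between $(M,c\circ\varphi)$ and $(M,c)$, this argument cannot serve the purpose it needs to serve in this section.

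Your hands-on alternative does not actually avoid the same dependence. The middle step --- that $\overline\eta(f)\in H_2(\pi;\cy 2)$ vanishes because the mod~$2$ Hurewicz homomorphism is trivial and hence ``pinch-map normal invariants are trivial'' --- only applies to self-equivalences in $G_\#(M)$, i.e.\ those inducing the identity on $\pi$ and $\Pi$. A general self-homotopy equivalence need not be a pinch map; in Section~\ref{sec:six} the reduction to the pinch case is achieved by first peeling off a homeomorphism $F$ realizing the given action on $\pi_1$ and $\pi_2$, and that reduction \emph{is} the content of Theorem~A(ii). So as written, this piece of your argument smuggles Theorem~A(ii) back in.

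The paper's proof is genuinely different and self-contained. It exploits functoriality of the normal $1$-type to write $c\circ f\simeq\phi\circ c$ for some $\phi\colon B\to B$ covering the identity on $BTop$, then uses the low-dimensional identification $B\simeq K(\bZ,4)\times K(\pi,1)$ to reduce the comparison of $\phi\circ c$ and $c$ to the $H^4(B;\bZ)$ component. If these differed, the normal invariant of $f$ would have a nonzero component in $H^2(\pi;\cy 2)\subset[M,G/TOP]$, and the Kirby--Siebenmann formula from \cite[p.~398]{Kirby:2001} would then force $KS$ to change under $f$ --- impossible for a self-equivalence. This avoids both the homotopy-theoretic analysis of Section~\ref{sec:six} and any appeal to a geometric cobordism. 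You should rework your argument along these lines, or at minimum replace the normal-invariant step in your alternative with the $KS$-formula argument.
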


\begin{proof} By functoriality, the homotopy equivalence $f\colon M \to M$ induces 
a self homotopy equivalence $\phi\colon B \to B$, such that $c\circ f \simeq \phi\circ c$. 
However, since $B = \BTopSpin \times K(\pi,1)$ has the homotopy type of $K(\bZ, 4) \times K(\pi,1)$ 
through dimensions $\leq 5$, 
the composition $\phi\circ c$ is determined by the map $\phi^*\colon H^4(B;\bZ) \to H^4(B; \bZ)$. 
Either $\phi\circ c \simeq c$ or $\phi\circ c$ differs from $c$ by a non-trivial map $K(\pi,1) \to K(\bZ, 4)$. 
In the latter case,
the normal invariant of $f$ would have non-zero component in $H^2(\pi;\cy 2) \subset [M, G/TOP]$. 
But this would imply a change in the Kirby-Siebenmann invariant from domain to range of $f$,  by the formula in
\cite[p.~398]{Kirby:2001}, 
which is impossible for a self homotopy equivalence.
\end{proof}

\begin{corollary}
Stably homeomorphic manifolds homotopy equivalent to $M$ are homeomorphic. 
Such manifolds are distinguished by their reduced normal invariant and the KS invariant.
\end{corollary}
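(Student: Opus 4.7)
The plan is to combine Kreck's modified surgery program with the homeomorphism classification of Theorem A. By Kreck's theorem, two $4$-manifolds with the common normal $1$-type $\xi\colon B\to BTop$ are stably homeomorphic if and only if they are $\xi$-bordant (after a compatible choice of reference maps). The preceding Lemma then ensures that for every $M'\simeq M$ the class $[M',c']\in\Omega_4(B,\xi)_M$ is independent of the choice of reference map $c'$, so it is a well-defined stable invariant of $M'$.

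Next I would identify $\Omega_4(B,\xi)_M$ with pairs $(\overline\eta,KS)\in H_2(\pi;\cy 2)\oplus\cy 2$. Of the three summands in $\Omega_4(B,\xi)=\cy 2\oplus H_2(\pi;\cy 2)\oplus\cy 2$ computed above, the condition $\gamma_*c'_*[M']=\gamma_*c_*[M]$ pins down one $\cy 2$-summand (the one detected by the edge homomorphism into $H_4(\pi;\bZ_-)$), leaving a coset of order $4$ in the remaining two factors. I would then argue that the $H_2(\pi;\cy 2)$-factor is detected by the reduced normal invariant $\overline\eta(M')$, while the last $\cy 2$-factor is detected by $KS$. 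Concretely, for a homotopy equivalence $f\colon M'\to M$ with normal invariant $(\alpha,\beta)\in H^2(M;\cy 2)\oplus H^4(M;\bZ)$, the class $\alpha$ pushes forward to $\overline\eta(M')\in H_2(\pi;\cy 2)$ via the identification from Section \ref{sec:six}, and $KS(M')$ is computed from $KS(M)$ by the Kirby-Taylor formula $\Delta KS=v_2(M)\cdot\alpha+\beta\pmod 2$.

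By Theorem A there are exactly four homeomorphism types $M,M',\ast M,\ast M'$ in this homotopy type. A direct check using the formula above (and $v_2(M)\ne 0$) produces the four distinct pairs $(0,0),(1,1),(0,1),(1,0)$ for $(\overline\eta,KS)$, matching the description at the end of Section \ref{sec:six}. The map from homeomorphism types in this homotopy type into $\Omega_4(B,\xi)_M$ is therefore a bijection, and both components of the pair are stable under connected sum with $S^2\times S^2$. It follows that any two stably homeomorphic manifolds in this homotopy type share the pair $(\overline\eta,KS)$, and hence represent the same homeomorphism type.

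The main obstacle is the middle paragraph: carefully matching the filtration quotients of $\Omega_4(B,\xi)_M$ with the concrete invariants $\overline\eta$ and $KS$, in particular verifying that the surviving $\cy 2$-factor really is detected by $KS$ rather than some other Spin-cobordism invariant. Once this identification is in hand, the remainder of the argument---the enumeration via Theorem A, the Kirby-Taylor formula, and stable invariance---is routine.
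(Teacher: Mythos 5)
Your proposal follows essentially the same route as the paper — Kreck's modified surgery, the preceding Lemma to make the $\xi$-bordism class of $M'$ independent of the choice of reference map, the computation of $\Omega_4(B,\xi)$, and the fact that $S_{TOP}(M)$ has four elements — but it differs in how the final step is closed. You take the explicit road: identify the coset $\Omega_4(B,\xi)_M$ with pairs $(\overline\eta,KS)$, invoke Theorem A and the Kirby--Taylor formula to enumerate the four pairs realized by $M,M',\ast M,\ast M'$, and read off injectivity. The paper instead runs a counting argument: the bordism calculation gives four stable classes, the structure set has four elements, and since a nontrivial orbit of the self-equivalence action on $S_{TOP}(M)$ would collapse these counts, the action must be trivial, so the reduced normal invariant is well-defined and the four homeomorphism types are stably distinct. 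The counting argument is shorter and neatly sidesteps the step you flag as your ``main obstacle'' (matching filtration quotients to $\overline\eta$ and $KS$), whereas your explicit enumeration, if you complete the identification, would give more information about which bordism class corresponds to which manifold. Both are correct; just note that you should actually verify the identification you defer, since your argument does depend on the four pairs being distinct in $\Omega_4(B,\xi)_M$ and not merely as normal invariants.
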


\begin{proof} According to the general theory of Kreck \cite{Kreck:1999}, to pass from bordism to the stable homeomorphism classification, we must consider the quotient of $\Omega_4(B, \xi)$ by the action of $\Aut(\xi)$. As pointed out by Kirby and Taylor \cite[pp.~394-395]{Kirby:2001}, it suffices to divide out the natural action of $\Out(\pi, w_1, w_2)$. 
The calculations above shows that this action is trivial, and hence  that the subset $\Omega_4(B,\xi)_M \subset \Omega_4(B,\xi)$   consists of 4 distinct stable homeomorphism classes, each represented by some homotopy equivalence $f\colon M' \to M$. 
However the structure set $S_{TOP}(M)$ has 4 elements (by Theorem \ref{thm:twoonet}), so there can be no non-trivial self homotopy equivalences. 
It follows that the choice of a homotopy equivalence $f\colon M' \to M$ is unique up to homotopy and composition with a homeomorphism. 
Hence the reduced normal invariant $\bar\eta(M') \in H_2(\pi;\cy 2)$ is a well-defined invariant of $M'$.
\end{proof}

\begin{proof}[The proof of Theorem A]
Here is a summary of the proof. Part (i) is proved in Lemma \ref{lem:orderfour}. By  Theorem \ref{thm:twoonet}, the structure set $S_{TOP}(M)$ has 4 elements, consisting of either two or four homeomorphism types of manifolds homotopy equivalent to $M$.  If $S_{TOP}(M)$ contained only two distinct homeomorphism types ($M$ and $\ast M$), then $M$ would admit a self-homotopy equivalence $(M,f)$ with non-trivial reduced normal invariant.  However, Lemma 10.8 shows that $(M,c\circ f)$ and $(M,c)$ are $\xi$-bordant. This would imply that the image of $S_{TOP}(M)$ in $ \Omega_4(B,\xi)$ would contain at most two distinct stable homeomorphism types. On the other hand,  Corollary   \ref{cor:tenfive} shows that $S_{TOP}(M)$ maps surjectively onto the subset  $\Omega_4(B,\xi)_M$, which consists of four distinct bordism classes  Hence no such self-equivalence of $M$ exists. This proves Parts (ii) and (iii) of Theorem A.
\end{proof}

\section{A  smooth fake version of $\bM$ ?}\label{sec:seven}

In this section we construct another smooth manifold $M''$ with $\pi_1(M'') = \cy 4$, which is homotopy equivalent to the geometric quotient $\bM$. At present we are not able to determine whether $M''$ is homeomorphic to $\bM$.

Let $M^+=S^2\times{S^2}/\langle\sigma^2\rangle=
S^2\times{S^2}/(s,s')\sim(A(s),A(s'))$ 
be the orientable double cover of $M=S^2\times{S^2}/\langle\sigma\rangle$.
Let $\Delta=\{(s,s)\mid{s}\in{S^2}\}$ be the diagonal in $S^2\times{S^2}$.
We may isotope $\Delta$ to a nearby sphere 
which meets $\Delta$ transversely in two points,
by rotating the first factor, 
and so $\Delta$ has self-intersection $\pm2$.
The diagonal is invariant under $\sigma^2$, 
and so $\delta=\Delta/\langle\sigma^2\rangle\cong{RP^2}$ embeds in $M^+$
with an orientable regular neighbourhood.
Since $\sigma(\Delta)\cap\Delta=\emptyset$ this also embeds in $M$.
We shall see that the complementary region also has a simple description.

We shall identify $S^3$ with the unit quaternions $\mathbb{H}_1$,
and view $S^2$ as the unit sphere in the space of purely imaginary quaternions.
The standard inner product on the latter space
is given by $v\bullet{w}=\mathfrak{Re}(v\bar{w})$,
for $v,w$ purely imaginary quaternions.
Let 
\[
C_x=\{(s,t)\in{S^2\times{S^2}}\mid{s\bullet{t}=x}\}, ~\forall~x\in[-1,1].
\]
Then $C_1=\Delta$ and $C_{-1}=\sigma(\Delta)$,
while $C_x\cong{C_0}$ for all $|x|<1$.
The map $f\colon S^3\to{C_0}$ given by $f(q)=(q\mathbf{i}q^{-1},q\mathbf{j}q^{-1})$
for all $q\in{S^3}$ is a 2-fold covering projection, and so $C_0\cong{RP^3}$.

It is easily seen that $N=\cup_{x\geq\varepsilon}C_x$ and $\sigma(N)$ 
are regular neighbourhoods of $\Delta$ and $\sigma(\Delta)$, 
respectively, while $C=\cup_{x\in[-\varepsilon,\varepsilon]}C_x\cong
{C_0\times[-\varepsilon,\varepsilon]}$.
In particular, $N$ and $\sigma(N)$ are each homeomorphic to the total space 
of the unit disc bundle in $T_{S^2}$, 
and $\partial{N}\cong{C_0}\cong{RP^3}$. 
The subsets $C_x$ are invariant under $\sigma^2$.
Hence $N(\delta)=N/\langle\sigma^2\rangle$ is the total space 
of the tangent disc bundle of $RP^2$.
In particular, $\partial{N(\delta)}\cong{L(4,1)}$ and
$\delta$ represents the nonzero element of $H_2(M;\mathbb{F}_2)$, 
since it has self-intersection 1 in $\mathbb{F}_2$.
\begin{remark}
It is not hard to show that any embedded surface representing
the nonzero element of $H_2(M;\mathbb{F}_2)$ is non-orientable
but lifts to $M^+$, and so has an orientable regular neighbourhood.
\end{remark}

We also see that $C/\langle\sigma^2\rangle\cong{L(4,1)}\times[-\varepsilon,\varepsilon]$.
Since $f(q.\frac1{\sqrt{2}}(\mathbf{1}+\mathbf{k}))=\sigma(f(q))$, 
the map $\tilde\sigma\colon S^3\to{S^3}$ defined by right multiplication
by $\frac1{\sqrt{2}}(\mathbf{1}+\mathbf{k})$ lifts $\sigma$.
Hence $C_0/\langle\sigma\rangle=S^3/\langle\tilde\sigma\rangle=L(8,1)$,
and so $MC=C/\langle\sigma\rangle$ is the mapping cylinder of 
the double cover $L(4,1)\to{L(8,1)}$.
Since $S^2\times{S^2}=N\cup{C}\cup\sigma(N)$ it follows that 
$M=N(\delta)\cup{MC}$.

This construction suggests a candidate for another smooth 4-manifold 
in the same (simple) homotopy type.
\begin{definition}\label{def:elevenone}
Let $M''=N(\delta)\cup{MC'}$, where $MC'$ is the mapping cylinder of 
the double cover $L(4,1)\to{L(8,5)}$.
Then $\pi_1(M'')\cong{\cy 4}$ and $\chi(M'')=1$, and so 
there is a homotopy equivalence $h\colon M''\simeq{M}$.
\end{definition}

Some questions for further investigation:
\begin{enumerate} 
\item Is there an easily analyzed explicit choice for $h\colon M'' \to M$,
with computable codimension two  Kervaire invariant?

\item Are $M$ and $M''$ homeomorphic? diffeomorphic?

\item Is there a computable homeomorphism (or diffeomorphism) invariant 
that can be applied here?
\end{enumerate}
We remark that most readily computable invariants are invariants of homotopy type.

\providecommand{\bysame}{\leavevmode\hbox to3em{\hrulefill}\thinspace}
\providecommand{\MR}{\relax\ifhmode\unskip\space\fi MR }
\providecommand{\MRhref}[2]{%
  \href{http://www.ams.org/mathscinet-getitem?mr=#1}{#2}
}
\providecommand{\href}[2]{#2}

\end{document}